\newcommand{\db}{\bar{d}}
\newcommand{\E}{\mathbb{E}}
\newcommand{\Q}{\mathbb{Q}}
\newcommand{\sig}{\sigma}
\newcommand{\R}{\ensuremath{\mathbb{R}}}     
\newcommand{\Z}{\ensuremath{\mathbb{Z}}}    
\renewcommand{\P}{\ensuremath{\mathbb{P}}}
\newtheorem{lemma}{Lemma}
\newtheorem{theo}{Theorem}
\newtheorem{coro}{Corollary}
\begin{document}
\DeclareGraphicsExtensions{.pdf,.gif,.jpg}

\keywords{Chains of infinite order, coupling, phase transition, Bramson-Kalikow, $\bar{d}$-distance}
\subjclass[2000]{Primary 60G10; Secondary 60G99}


\title[Explicit estimates in the Bramson-Kalikow model]{Explicit estimates in the Bramson-Kalikow model}



%
\author{Gallesco, C.$^1$}
\address{$^1$Departmento de Estat\'istica, Instituto de Matem\'atica, Estat\'istica e Ci\^encia de Computa\c{c}\~ao, Universidade de Campinas, Brasil}
\email{gallesco@ime.unicamp.br}

\author{Gallo, S.$^2$}
\address{$^2$Departamento de M\'etodos Estat\'isticos, Instituto de Matem\'atica, Universidade Federal de Rio de Janeiro, Brasil.}
 \email{sandro@im.ufrj.br}
 
 \author{Takahashi, D. Y.$^3$}
\address{$^3$Neuroscience Institute and Psychology Department, 
Princeton University, USA.}
\email{takahashiyd@gmail.com}

\thanks{This work is supported by USP project ``Mathematics, computation, language and the brain'', FAPESP project ``NeuroMat'' (grant 2011/51350-6),  CNPq projects ``Stochastic Modeling of the Brain Activity'' (grant 480108/2012-9), and ``Sistemas 
Estoc\'asticos: equil\'ibrio e n\~ao equil\'ibrio, limites em escala e percola\c c\~ao'' (grant 
474233/2012-0). SG was partially supported by FAPESP (grant 2009/09809-1). CG was partially supported by FAPESP (grant 2009/51139-3). DYT was partially supported by FAPESP grant 2008/08171-0 and Pew Latin American Fellowship.}


\maketitle

\begin{abstract}
The aim of the present article is to explicitly compute parameters for which the Bramson-Kalikow model exhibits phase-transition. The main ingredient of the proof is a simple new criterion for non-uniqueness of $g$-measures.  We show that the existence of multiple $g$-measures compatible with a  function $g$ can be proved by estimating the $\bar{d}$-distances between some suitably chosen Markov chains. The method is optimal for the important class of binary regular attractive functions, which includes the Bramson-Kalikow model.
\end{abstract}

\section{Introduction} \label{sec:Intro}

In this work we consider chains of infinite order, or equivalently $g$-measures, on a finite alphabet. They constitute an important class of stochastic models, which includes, for example, Markov chains,  stochastic models that exhibit non-uniqueness and models that are not Gibbsian \citep{fernandez/gallo/maillard/2011}. The question of uniqueness of $g$-measures was extensively studied and important progresses have been obtained in several areas related to probability and ergodic theory, from the seminal works of \cite{onicescu/mihoc/1935, doeblin/fortet/1937} to recent advances in \cite{johansson/oberg/pollicott/2012, gallo/paccaut/2013}, and the contributions of \cite{harris/1955, keane/1972, walters/1975,  lalley/1986, stenflo/2002, fernandez/maillard/2005} among many others. Notwithstanding, the problem of non-uniqueness is much less understood and the literature is still based on few examples \citep{bramson/kalikow/1993, hulse/2006, berger/hoffman/sidoravicius/2005}. As far as we know, general criteria for non-uniqueness have only been obtained for the class of regular attractive functions \citep{gallo/takahashi/2011,  hulse/1991}. 
 
In the present article we focus on the Bramson-Kalikow (BK) model \citep{bramson/kalikow/1993}. It is the most well studied example of non-uniqueness \citep{lacroix/2000, friedli/2010, gallo/takahashi/2011}, nevertheless our understanding of the model is still far from complete. For instance, to our knowledge, there is no explicit computation of the values of the parameters for which the BK model exhibits multiple $g$-measures (see \citet{friedli/2010} for related discussion).    Our main result (Theorem \ref{theo:BK}) gives such explicit  relationship between the parameters in the case of non-uniqueness.  Furthermore, we obtain an improvement on the range of parameters that imply non-uniqueness of the BK model. Corollaries \ref{prop:application} and  \ref{prop:application2} give numerical examples of choices for these parameters.  The proof of this result is based on three ingredients: (1) a new and simple criterion for non-uniqueness of $g$-measures (Theorem \ref{theo:perturbation}), (2) a concentration of measure inequality for $g$-measures obtained using a result from \citet{chazottes/collet/kulske/regig/2000}, and (3) $\bar{d}$-distance estimates using a coupling from the past algorithm. 

Theorem \ref{theo:perturbation} has a life of its own and is a criterion for non-uniqueness of $g$-measures that in principle can be applied to other models.
The motivation of Theorem \ref{theo:perturbation} is to avoid the direct study of functions $g$ with multiple $g$-measures as these are objects that are generally difficult to analyze \citep{gallo/takahashi/2011}. Instead, we study the properties of a sequence of suitably chosen Markov chains.  Theorem \ref{theo:perturbation} is inspired by the works of \citet{bramson/kalikow/1993}, \citet{lacroix/2000}, and \citet{hulse/1991}, but has the advantage of being formulated using the $\bar{d}$-distance, which is key to our constructive proof of Theorem  \ref{theo:BK}. Moreover, Theorem \ref{theo:attractive}  states that our criterion (Theorem \ref{theo:perturbation})  is optimal in the important class of binary regular attractive functions, giving a necessary and sufficient condition for non-uniqueness in this class, which includes the BK model. 

\vspace{0,2cm}

The article is organized as follows. We state the main results and relevant definitions in Section \ref{sec:main}. In Section \ref{sec:couplings} we introduce the couplings used to prove Theorem \ref{theo:BK} and Section \ref{sec:prooftheo2} contains the proofs of Theorem \ref{theo:BK}  and Corollaries \ref{prop:application} and \ref{prop:application2}. Finally, in Section  \ref{sec:proofs} we prove Theorem \ref{theo:perturbation} and Theorem \ref{theo:attractive}.

\section{Notation, definitions and main results} \label{sec:main}

Let $A$ be a finite set we call \emph{alphabet} and $\mathcal{X} = A^{\Z_-}$. We denote by $x_i$ the $i$-th coordinate of $x \in \mathcal{X}$ and for $i \leq j$ we write $x^{-i}_{-j}:=(x_{-i}\ldots x_{-j})$. For $x, y \in \mathcal{X}$, a \emph{concatenation} $x^{0}_{-i}y$ is a new sequence $z\in \mathcal{X}$ with $z^0_{-i} = x^{0}_{-i}$ and $z^{-i-1}_{-\infty} = y$. We introduce on $\mathcal{X}$ the metric $\rho(x,y) := \min\{\frac{1}{j+1}: x^0_{-j} = y^0_{-j} \}$, which turns $\mathcal{X}$ into a compact metric space. Denote by $\mathcal{B}$ the Borel $\sigma$-algebra on $\mathcal{X}$.  Let $T: \mathcal{X} \rightarrow \mathcal{X}$ be the \emph{shift operator} such that for $x \in \mathcal{X}$ we have $(Tx)_i = x_{i-1}$.  We denote by $\mathcal{C}(\mathcal{X})$ the space of continuous functions with norm $\|f\|:= \sup_{x \in \mathcal{X}}|f(x)|$. Let also 
 $$\mathcal{G}:=\{g\in\mathcal{C}(\mathcal{X}):g(x)\in(0,1)\,\,\textrm{and}\,\,\sum_{a\in A}g(ax)=1,\,\,\forall x\in \mathcal{X}\}.$$
 In the literature \citep{bramson/kalikow/1993}, a function in $\mathcal{G}$ is called \emph{regular}.
 We denote by $\mathcal{M}_k$ the set of regular $k$-th order \emph{Markov kernels} on $\mathcal{X}$ and by  $\mathcal{M} = \bigcup_{k \geq 0} \mathcal{M}_k$ the set of regular Markov kernels. We have $\mathcal{M} \subset \mathcal{G} $. 
 Sometimes, we consider a well ordered set $A$ and then $\mathcal{X}$ is endowed with partial order $x \geq y \Leftrightarrow x_i \geq y_i$ for all $i \in \Z_{-}$. A function $g \in \mathcal{G}$ is \emph{attractive} if $A$ is well ordered and for all $a\in A$, $\sum_{b \geq a}g(bx)$ is an increasing function of $x \in \mathcal{X}$.

Let $g \in \mathcal{G}$, following \cite{walters/1975} we say that a probability measure $\mu$ on $\mathcal{X}$ is a \emph{$g$-measure} if it is $T$-invariant and, for all $a \in A$ and $x \in \mathcal{X}$, $\mu(\{x \in \mathcal{X}: x_0 = a\}| T^{-1}\mathcal{B})(x) = g(ax^{-1}_{-\infty})$ or equivalently, 
\begin{equation}\label{eq:invariant}
\int_\mathcal{X} fd\mu = \int_\mathcal{X} \sum_{a \in A}g(ax)f(ax)d\mu,
\end{equation}
for all $f \in \mathcal{C}(\mathcal{X})$. A process $(X_n)_{n\in\Z}$ is said to be \emph{compatible with $g$} if its law is a $g$-measure. In this article, we are interested on conditions for \emph{non-uniqueness} of $g$-measures, \textit{i.e.}, sufficient conditions for the existence of several $g$-measures  with the same function $g$.\\

We will now define the model introduced by \cite{bramson/kalikow/1993}. Let $A = \{-1,+1\}$, $\epsilon \in (0,1/2)$, and $(m_j)_{j \geq 1}$ be an increasing sequence of positive odd numbers. Let $x \in \mathcal{X}$, we denote by $p_{[m_j]} \in \mathcal{M}_{m_j}$ the function
\begin{equation} \label{eq:BKpart}
p_{[m_j]}(x) = {\bf1}\left\{x_0\sum_{l=1}^{m_j}x_{-l} > 0\right\}(1-\epsilon) + {\bf1}\left\{x_0\sum_{l=1}^{m_j}x_{-l} < 0\right\}\epsilon.
\end{equation}
Let $(\lambda_j)_{j \geq 1}$ be a sequence of positive numbers such that $\sum_{j=1}^\infty \lambda_j = 1$. Given $(m_j)_{j \geq 1}$ and $(\lambda_j)_{j \geq 1}$, the BK-model is given by the function $p \in \mathcal{G}$ such that, for all $x \in \mathcal{X}$,
\begin{equation} \label{eq:BKdef}
p(x) = \sum_{j = 1}^\infty \lambda_j p_{[m_j]}(x).
\end{equation}
It is immediate that the BK-model $p$ is attractive and regular. \citet{bramson/kalikow/1993} showed that if $\lambda_j = (1-s)s^{j-1}$ for $s \in (2/3,1)$, there exists a sequence $(m_j)_{j \geq 1}$ for which the BK model has multiple $p$-measures. However, it is not known  how the sequence  $(m_j)_{j \geq 1}$ should be explicitly chosen.
Theorem \ref{theo:BK} below exhibits an explicit relationship between sequences $(\lambda_j)_{j \geq 1}$ and $(m_j)_{j\geq 1}$ for which there are multiple $p$-measures.

\begin{theo} \label{theo:BK}
Let $(\lambda_j)_{j \geq 1}$ and $(m_j)_{j \geq 1}$ be the sequences that define the BK model $p$ in (\ref{eq:BKdef}). Let $m_0 = 0$,  $r:\{1,2,\ldots\}\rightarrow \Z_+$ be a function such that $r_k < k$, and $\alpha\in(0,\frac{1}{2}-\epsilon)$. If for all $k \geq 0$ we have $\sum_{j\geq k+2}\lambda_j > \sum_{j=r_{k+1}+1}^{k+1}\lambda_{j}$ and 
\begin{align} \label{eq:theo2a}
m_{k+1} \geq \frac{A_k} {\left(\sum_{j\geq k+2}\lambda_j - \sum_{j=r_{k+1}+1}^{k+1}\lambda_{j}\right)^2},
\end{align}
where 
\begin{equation} \label{eq:theo2b}
A_k := 8\Big( 1-2\epsilon\Big)^{-2}(1+m_{r_{k+1}} (2\epsilon)^{-m_{r_{k+1}}})^{2}\ln\Big(2^{k+2}(1+m_k(2\epsilon)^{-m_k})\alpha^{-1}\Big),
\end{equation}
then the corresponding BK model $p$ has multiple $p$-measures. 
\end{theo}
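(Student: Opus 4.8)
The plan is to derive Theorem~\ref{theo:BK} from Theorem~\ref{theo:perturbation} by exhibiting, for the given data $(\lambda_j)$ and $(m_j)$, two explicit sequences of Markov kernels converging to $p$. Put $a_j:=\sum_{l>j}\lambda_l$ and define, for $j\ge0$,
\begin{equation*}
g_j(x):=\sum_{l=1}^{j}\lambda_l\,p_{[m_l]}(x)+a_j\Bigl(\mathbf{1}\{x_0=+1\}(1-\epsilon)+\mathbf{1}\{x_0=-1\}\epsilon\Bigr),
\end{equation*}
and let $g_j'$ be the same expression with $+1$ and $-1$ interchanged in the last bracket; thus $g_j$ (resp.\ $g_j'$) is the BK kernel in which every scale beyond the $j$-th has been collapsed onto the deterministic ``predict $+1$'' (resp.\ ``predict $-1$'') rule, up to the error $\epsilon$. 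Each $g_j$ is continuous, strictly positive, satisfies $\sum_{a\in A}g_j(ax)=1$, and depends only on $x^{-1}_{-m_j}$, so $g_j\in\mathcal{M}_{m_j}$ (it is irreducible and aperiodic since $\epsilon\in(0,1/2)$), and $\|g_j-p\|\le a_j\to0$; likewise for $g_j'$. Hence both sequences converge in $\mathcal{C}(\mathcal{X})$ to $p\in\mathcal{G}$, so by Theorem~\ref{theo:perturbation} it suffices to verify (\ref{eq:perturbation}) for one value of $k$. I take $k=0$: since $m_0=0$ and $a_0=1$, $\mu_0$ and $\mu_0'$ are the i.i.d.\ laws with $\P(x_0=+1)$ equal to $1-\epsilon$ and to $\epsilon$ respectively, so the coordinatewise maximal coupling is optimal and $\bar{d}(\mu_0,\mu_0')=1-2\epsilon$. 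Because $\alpha<\tfrac12-\epsilon$, it therefore suffices to prove $\bar{d}(\mu_j,\mu_{j+1})\le\alpha\,2^{-(j+3)}$ for every $j\ge0$ (and the same for the primed chains), since then $\sum_{j\ge0}\bigl[\bar{d}(\mu_j,\mu_{j+1})+\bar{d}(\mu_j',\mu_{j+1}')\bigr]\le\alpha/2<\tfrac12-\epsilon<1-2\epsilon=\bar{d}(\mu_0,\mu_0')$.

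The second step is the coupling behind this per-step bound, and it is where the construction of Section~\ref{sec:couplings} enters. The kernels $g_j$ and $g_{j+1}$ differ only through the single term $\lambda_{j+1}\bigl(p_{[m_{j+1}]}(x)-\mathbf{1}\{x_0=+1\}(1-\epsilon)-\mathbf{1}\{x_0=-1\}\epsilon\bigr)$, which vanishes exactly when the majority of $(x_{-1},\ldots,x_{-m_{j+1}})$ is $+1$, i.e.\ when $\sum_{l=1}^{m_{j+1}}x_{-l}>0$. Driving the two stationary chains by a common stream of uniform variables, one couples them so that, at any time when they already agree on the preceding $m_j$ coordinates and the $m_{j+1}$-window of the $\mu_{j+1}$-chain carries a $+1$-majority, their new coordinates coincide; after a time at which they disagree, re-synchronisation occurs once those $m_j$ coordinates have all been redrawn to a common value, which happens within a time of order $1+m_j(2\epsilon)^{-m_j}$ in the worst case (each coordinate being refreshed with probability at least $\epsilon$ per step). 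Consequently $\bar{d}(\mu_j,\mu_{j+1})\le\bigl(1+m_j(2\epsilon)^{-m_j}\bigr)\,q_j$, where $q_j$ is the stationary $\mu_{j+1}$-probability that a window of length $m_{j+1}$ has a $-1$-majority; the symmetric analysis (tracking a $+1$-majority) bounds $\bar{d}(\mu_j',\mu_{j+1}')$.

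Bounding $q_j$ is the heart of the argument and the step I expect to be the main obstacle. Under $\mu_{j+1}$ the collapsed scales $l\ge j+2$ push each coordinate towards $+1$ with net weight $\sum_{l\ge j+2}\lambda_l$, the scales $r(j+1)<l\le j+1$ may vote the other way in the worst case (costing $\sum_{l=r(j+1)+1}^{j+1}\lambda_l$), and the $m_{r(j+1)}$ most recent coordinates behave like the slowly mixing ``state'' of an order-$m_{r(j+1)}$ sub-chain; carrying this through, the drift of the window sum $\sum_{l=1}^{m_{j+1}}x_{-l}$ under $\mu_{j+1}$ is at least $m_{j+1}(1-2\epsilon)\bigl(\sum_{l\ge j+2}\lambda_l-\sum_{l=r(j+1)+1}^{j+1}\lambda_l\bigr)$ up to a boundary correction of order $m_{r(j+1)}$ — which is precisely why the hypothesis demands this difference of $\lambda$-sums to be positive. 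A Hoeffding/Azuma estimate along the coupled evolution, whose bounded-difference constant absorbs the mixing factor $1+m_{r(j+1)}(2\epsilon)^{-m_{r(j+1)}}$, then yields
\begin{equation*}
q_j\;\le\;2\exp\!\left(-\frac{(1-2\epsilon)^2\,m_{j+1}}{8\bigl(1+m_{r(j+1)}(2\epsilon)^{-m_{r(j+1)}}\bigr)^{2}}\Bigl(\sum_{l\ge j+2}\lambda_l-\sum_{l=r(j+1)+1}^{j+1}\lambda_l\Bigr)^{\!2}\right),
\end{equation*}
and requiring the right-hand side to be at most $\alpha\,2^{-(j+3)}\bigl(1+m_j(2\epsilon)^{-m_j}\bigr)^{-1}$ is, after comparing logarithms, exactly the lower bound (\ref{eq:theo2a}) on $m_{j+1}$ with $A_k$ as in (\ref{eq:theo2b}) (the numerical constant and the logarithmic term in $A_k$ issue from this comparison and from the re-synchronisation estimate); hence $\bar{d}(\mu_j,\mu_{j+1})\le\alpha\,2^{-(j+3)}$. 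The delicate points are establishing the re-synchronisation estimate with that return time and obtaining the concentration bound for the Markov (not i.i.d.) window sum with constants sharp enough to reproduce $A_k$.

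Summing these bounds over $j\ge0$ (a geometric series), $\bar{d}(\mu_j,\mu_{j+1})$ and $\bar{d}(\mu_j',\mu_{j+1}')$ add up to less than $1-2\epsilon=\bar{d}(\mu_0,\mu_0')$, so (\ref{eq:perturbation}) holds with $k=0$ and Theorem~\ref{theo:perturbation} produces two distinct $p$-measures, automatically with Bernoulli natural extensions. Corollaries~\ref{prop:application} and~\ref{prop:application2} then follow by specialising to $\lambda_j=(1-r)r^{j-1}$ and to a convenient lookback such as $r(k)=k-1$, for which $\sum_{l\ge k+2}\lambda_l-\sum_{l=r(k+1)+1}^{k+1}\lambda_l=r^k(2r-1)$ is positive whenever $r>1/2$, and then (\ref{eq:theo2a}) can be solved recursively for an explicit admissible sequence $(m_j)$.
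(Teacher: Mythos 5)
Your proposal is correct and follows essentially the same route as the paper: your kernels $g_j,g_j'$ are exactly the paper's truncations $p_j,p_j'$ from (\ref{eq:MarkovApp1})--(\ref{eq:MarkovApp2}), you apply Theorem \ref{theo:perturbation} at $k=0$ with $\bar d(\mu_0,\mu_0')=1-2\epsilon$, and your bound $\bar d(\mu_j,\mu_{j+1})\le(1+m_j(2\epsilon)^{-m_j})\,q_j$ with $q_j$ controlled by passing to the lower-order dominating chain (the paper's $q_{r,k+1}$) and a bounded-difference concentration estimate is precisely the paper's regeneration-time/Wald argument combined with Lemmas \ref{lemma:uppertime}--\ref{Lemconc}. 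The one step you assert rather than prove --- concentration for the non-i.i.d.\ window sum with the mixing factor $(1+m_{r(j+1)}(2\epsilon)^{-m_{r(j+1)}})^2$ in the denominator --- is exactly what the paper supplies via the Chazottes--Collet--K\"ulske--Redig inequality in Lemma \ref{Lemconc}, so apart from immaterial constant choices (your target $\alpha 2^{-(j+3)}$ versus the paper's $\alpha 2^{-(k+1)}$) the two proofs coincide.
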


Let us now give two numerical examples of sequences $(\lambda_j)_{j \geq 1}$ and $(m_j)_{j\geq 1}$ for which there are multiple $p$-measures, illustrating the relationship between the sequences $(\lambda_j)_{j\geq 1}$ and $(m_j)_{j\geq 1}$ in Theorem \ref{theo:BK}.

\begin{coro} \label{prop:application}
   Let $\epsilon=1/4$ and for $j\geq 1$, $\lambda_j=\frac{1}{2}\big(\frac{2}{3}\big)^j$. Let $m_1=217$, $c$ be an odd positive integer, and for $j\geq 1$,  $m_{j+1} = c^{m_j}$. If $c \geq 577$, then the associated BK model has multiple $p$-measures.
\end{coro}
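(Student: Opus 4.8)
The plan is to obtain the corollary as a direct specialization of Theorem~\ref{theo:BK}: one has to exhibit an admissible function $r$ and a constant $\alpha\in(0,\tfrac12-\epsilon)=(0,\tfrac14)$ for which both hypotheses of that theorem hold with $\epsilon=\tfrac14$, $\lambda_j=\tfrac12(\tfrac23)^j$, $m_1=217$ and $m_{j+1}=c^{m_j}$. First I would pin down $r$. Since $\sum_{j\ge n}\lambda_j=(\tfrac23)^{n-1}$, the first hypothesis $\sum_{j\ge k+2}\lambda_j>\sum_{j=r(k+1)+1}^{k+1}\lambda_j$ simplifies to $2(\tfrac23)^{k+1}>(\tfrac23)^{r(k+1)}$, i.e.\ $(\tfrac32)^{(k+1)-r(k+1)}<2$; because $\log_{3/2}2\approx1.71$, this forces $r(k+1)=k$ for every $k\ge0$, that is $r(k)=k-1$, so in particular $m_{r(1)}=m_0=0$. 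With this $r$ the difference in the denominator of (\ref{eq:theo2a}) equals $\sum_{j\ge k+2}\lambda_j-\lambda_{k+1}=2\lambda_{k+1}-\lambda_{k+1}=\lambda_{k+1}=\tfrac12(\tfrac23)^{k+1}$.

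Next I would substitute $\epsilon=\tfrac14$ into (\ref{eq:theo2b}), using $(1-2\epsilon)^{-2}=4$ and $(2\epsilon)^{-m}=2^{m}$; since $r(k+1)=k$, both the squared factor of $A_k$ and the argument of its logarithm involve $m_k$, so (\ref{eq:theo2a}) becomes, for all $k\ge0$,
\begin{equation*}
m_{k+1}\;\ge\;128\,(9/4)^{k+1}\,(1+m_k 2^{m_k})^2\,\ln\!\big(2^{k+2}(1+m_k 2^{m_k})\,\alpha^{-1}\big),
\end{equation*}
which for $k=0$, since $m_0=0$, reduces to $m_1\ge288\ln(4\alpha^{-1})$. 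I would fix $\alpha$ as large as (\ref{eq:theo2b}) permits, so as to make every logarithm as small as possible, and then verify this recursion by induction on $k$: the base case is a finite computation involving $m_1=217$ and the chosen $\alpha$, and the inductive step is exactly the substitution $m_{k+1}=c^{m_k}$.

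For the inductive step the governing estimates are $(1+m_k 2^{m_k})^2\le 4^{m_k}(1+m_k)^2$ and $\ln\!\big(2^{k+2}(1+m_k 2^{m_k})\alpha^{-1}\big)\le(m_k+k+2)\ln2+\ln(1+m_k)+\ln\alpha^{-1}$, so the right-hand side above is $4^{m_k}$ times a factor that is polynomial in $m_k$ and in $(9/4)^k$. Since $m_k\ge m_1=217$ and $(m_k)_{k\ge1}$ grows doubly exponentially — so that $k$ is negligible against $m_k$ — it suffices to check that $(c/4)^{m_k}$ overtakes that polynomial factor, i.e.\ an inequality of the shape $m_k\ln(c/4)\ge K_1+K_2\ln m_k+K_3\,k$ with absolute constants $K_i$; the binding instance is $k=1$, where $m_k=m_1=217$ is smallest, and carrying the constants through honestly there is precisely what yields the threshold $c\ge577$ (the least admissible odd integer above it). Every step with $k\ge2$ then holds with an enormous margin, because $c^{m_k}$ dwarfs $4^{m_k}\,\mathrm{poly}(m_k)\,(9/4)^k$ once $m_k$ is as large as $c^{217}$.

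I expect the only real work to be the two finite verifications at $k=0$ and $k=1$, where one must keep track of the logarithmic and lower-order polynomial constants carefully enough to certify that $m_1=217$ clears the $k=0$ bound with the chosen $\alpha$ and that $c=577$ clears the $k=1$ bound; everything else follows at once from the doubly exponential growth of $(m_k)$. Once this recursion is in place, Theorem~\ref{theo:BK} applies and yields that the BK model with these parameters has multiple compatible stationary measures.
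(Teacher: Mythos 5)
Your route is exactly the paper's: take $r(k)=k-1$ (you correctly observe this choice is in fact forced by the admissibility condition, which the paper does not remark), note that the denominator in (\ref{eq:theo2a}) equals $\lambda_{k+1}^2$, reduce (\ref{eq:theo2a}) to $m_{k+1}\ge 128\,(9/4)^{k+1}(1+m_k2^{m_k})^2\ln\bigl(2^{k+2}(1+m_k2^{m_k})\alpha^{-1}\bigr)$, and close the induction for $k\ge1$ by absorbing the polynomial and $(9/4)^{k}$ factors into $(c/4)^{m_k}$; that inductive step is sound and matches the paper's chain $A_k/\lambda_{k+1}^2\le 512(9/2)^{k+1}(1+m_k2^{m_k})^3\le(577)^{m_k}$.

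The genuine problem is the base case $k=0$, which you defer to ``a finite computation'' and assert will work out: it does not. Your own (correct) reduction gives $m_1\ge 288\ln(4\alpha^{-1})$, and since $\alpha<\tfrac12-\epsilon=\tfrac14$, this is at least $288\ln 16\approx 798.5$ for every admissible $\alpha$; with the paper's choice $\alpha=1/8$ it is $1440\ln 2\approx 998$. Hence $m_1=217$ does \emph{not} satisfy (\ref{eq:theo2a}) for $k=0$, no matter how you tune $\alpha$. The paper's own proof claims the threshold is $320(3/2)^2\ln 2\approx 216.74$, but that evaluation is arithmetically inconsistent ($320(3/2)^2\ln 2=720\ln 2\approx 499$, and $216.74=720\log_{10}2$, i.e.\ a base-$10$ logarithm appears to have been substituted for $\ln$; moreover the constant implied by (\ref{eq:theo2b}) is $1440\ln2$, not $720\ln 2$). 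So the gap you would hit is inherited from the paper, but as written your plan --- ``certify that $m_1=217$ clears the $k=0$ bound with the chosen $\alpha$'' --- cannot be completed with the constants of Theorem \ref{theo:BK}; one would have to either enlarge $m_1$ to about $999$ (which then perturbs the $k=1$ verification of the threshold $c\ge577$, though only favorably, since a larger $m_1$ only helps there) or sharpen the constants in (\ref{eq:theo2b}).
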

 
 The next corollary illustrates the improvement on the growth rate of $(m_j)_{j\geq 1}$ that we obtain due to a better understanding of its relationship with the rate of $(\lambda_j)_{k \geq 1}$ through the function $r$ in Theorem \ref{theo:BK}.
  \begin{coro} \label{prop:application2}
   Let $\epsilon=1/4$ and for $j \geq 1$, $m_j = 2^{cj^2}-1$. Let $b_1=1$, $c \geq 0$,  and for $l \geq 2$, $b_l = 2^{(c\sum_{j=1}^{l-1}b_j)^2}$. For $l \geq 1$ and $j \in \{\sum_{k=1}^{l-1}b_{k}+1, \ldots, \sum_{k=1}^{l}b_{k}\}$ we set
$\lambda_j = (3/4)^{l-1}/(4b_l)$. If $ c \geq 8$,
 then the associated BK model has multiple $p$-measures.
 \end{coro}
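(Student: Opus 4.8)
The plan is to derive Corollary \ref{prop:application2} as a direct application of Theorem \ref{theo:BK}: I will exhibit an admissible function $r$ and a parameter $\alpha$ for which, when $c\geq 7$, both hypotheses of that theorem hold for every $k\geq 0$. Throughout I write $S_l:=\sum_{i=1}^{l}b_i$ (with $S_0=0$), so that block $l$ is the index range $\{S_{l-1}+1,\dots,S_l\}$, it carries total $\lambda$-mass $\frac14(3/4)^{l-1}$, and summing over $l$ recovers $\sum_j\lambda_j=1$; here the $m_j$ are odd, positive and increasing, so the BK model (\ref{eq:BKdef}) is well defined. Since $\epsilon=\tfrac14$ the admissibility constraint on $\alpha$ reads $0<\alpha<\tfrac14$, and I would fix any such value, say one close to $\tfrac14$. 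For $k\geq 0$, let $l=l(k+1)$ denote the index of the block containing $k+1$ and set $r(k+1):=S_{l-2}$ when $l\geq 3$ and $r(k+1):=0$ when $l\in\{1,2\}$; then $r(k+1)<k+1$ in every case.

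For the positivity hypothesis $\sum_{j\geq k+2}\lambda_j>\sum_{j=r(k+1)+1}^{k+1}\lambda_j$ I would argue that, when $k+1$ lies in a block $l\geq 3$, the right-hand side is at most the combined mass of blocks $l-1$ and $l$, namely $\frac14(3/4)^{l-2}+\frac14(3/4)^{l-1}=\frac{7}{16}(3/4)^{l-2}$, while the left-hand side is at least the mass of blocks $l+1,l+2,\dots$, namely $(3/4)^{l}=\frac{9}{16}(3/4)^{l-2}$; the difference is thus at least $\frac18(3/4)^{l-2}>0$, and the cases $l=1,2$ yield directly the lower bounds $\tfrac12$ and $\tfrac18$. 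In particular the denominator in (\ref{eq:theo2a}) is at least $\frac18(3/4)^{l-2}$.

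For (\ref{eq:theo2a}) itself, $\epsilon=\tfrac14$ gives $(1-2\epsilon)^{-2}=4$ and $(2\epsilon)^{-m}=2^{m}$, so $A_k=32\,(1+m_{r(k+1)}2^{m_{r(k+1)}})^{2}\ln\!\big(2^{k+2}(1+m_k2^{m_k})\alpha^{-1}\big)$. The crucial point is that $m_{r(k+1)}$ is minuscule next to $k$: since $k+1$ lies in block $l$ we have $k\geq S_{l-1}\geq b_{l-1}=2^{(cS_{l-2})^2}$, hence $m_{r(k+1)}=m_{S_{l-2}}=2^{cS_{l-2}^2}-1<\big(2^{c^2S_{l-2}^2}\big)^{1/c}\leq k^{1/c}$. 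Combining this with the elementary bounds $1+m2^m\leq 2^{2m+1}$ and $\ln\!\big(2^{k+2}(1+m_k2^{m_k})\alpha^{-1}\big)\leq Cm_k$ (for $k\geq1$), with $m_k\leq m_{k+1}$ and the ratio bound $m_{k+1}/m_k\geq 2^{c(2k+1)-1}$, and with the lower bound on the denominator from the previous step, I would reduce (\ref{eq:theo2a}) to an inequality of the shape $2^{c(2k+1)-1}(3/4)^{2(l-2)}\geq C'\,2^{4k^{1/c}}$. Since $l=l(k+1)$ grows only like an iterated logarithm of $k$ (the $S_l$ grow at least like a tower of exponentials), the factor $(3/4)^{2(l-2)}$ is subpolynomially small, and for $c\geq 7$ the term $c(2k+1)$ on the left dominates $4k^{1/c}$ on the right; the finitely many small $k$ (in particular $k=0$, where $m_{r(1)}=m_0=0$ and $A_0=32\ln(4/\alpha)$) require only a direct numerical check against the constant in $A_k$. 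Theorem \ref{theo:BK} then yields multiple compatible stationary measures.

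The delicate step is the choice of $r$. The positivity hypothesis pushes $r(k+1)$ up toward $k+1$, so that $\sum_{j=r(k+1)+1}^{k+1}\lambda_j$ remains below the tail; the bound (\ref{eq:theo2a}) pushes $r(k+1)$ down, since $m_{r(k+1)}$ enters $A_k$ essentially through $2^{2m_{r(k+1)}}$, which must not overwhelm $m_{k+1}$. Reconciling these is exactly the role of the block construction: making the $\lambda_j$ constant inside a block with geometrically decaying block masses makes ``two blocks back'' harmless for positivity, while the double-exponential block sizes $b_l=2^{(cS_{l-1})^2}$ ensure that once $k+1$ has entered block $l$ the index is already large enough that $m_{k+1}$ dominates $A_k$ — and it is precisely the gap between the exponent $c$ in $m_{S_{l-2}}$ and the exponent $c^2$ in $b_{l-1}$ that provides the slack. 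Corollary \ref{prop:application} fits the same template, with the faster iterated exponentials $m_{j+1}=c^{m_j}$ and the simpler choice $r(k)=k-1$.
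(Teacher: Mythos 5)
Your strategy is the paper's: apply Theorem \ref{theo:BK}, extract the lower bound $\tfrac18(3/4)^{l-2}$ on the mass difference from the block structure, control $m_{r(k+1)}$ by $k^{1/c}$, and win by comparing $m_{k+1}/m_k\approx 2^{c(2k+1)}$ against the remaining factors. The only structural difference is the choice of $r$: you take $r(k+1)=S_{l-2}$ directly from the blocks, while the paper takes $r(k)=\lfloor\sqrt{\log k}/c\rfloor$; since $k> b_{l-1}=2^{(cS_{l-2})^2}$ forces $\lfloor\sqrt{\log k}/c\rfloor\geq S_{l-2}$, the two choices yield the same key estimate $m_{r(k+1)}\leq k^{1/c}$ and the same positivity bound, so this is cosmetic. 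For $k\geq 1$ your chain of inequalities does close for $c\geq 7$.

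However, the step you defer to ``a direct numerical check'' at $k=0$ does not pass for $c=7$. There $r(1)=0$ is forced, $m_0=0$, so $A_0=32\ln(4/\alpha)$; the mass difference is exactly $\sum_{j\geq 2}\lambda_j-\lambda_1=1/2$; and (\ref{eq:theo2a}) demands $m_1\geq 4A_0=128\ln(4/\alpha)>128\ln 16\approx 355$ for every admissible $\alpha<1/4$ --- but $m_1=2^7-1=127$. So the hypotheses of Theorem \ref{theo:BK} fail at $k=0$, and one needs roughly $c\geq 9$ (or sharper constants in Theorem \ref{theo:BK}) to close that case. You should be aware that the paper's own proof has the same blind spot: its intermediate bounds $B_k\leq 2^{ck}$ and $\tfrac18(3/4)^{l-2}\geq 2^{-ck}$ are both false at $k=0$, and the final inequality $2^{c(k+1)^2}\geq 81\cdot 2^{c(k^2+2k)}$ is only checked formally. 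The defect therefore lies in the corollary's constant $c=7$ rather than in your overall argument, but since you explicitly assert that the small-$k$ check succeeds, you must either carry it out honestly (and adjust $c$) or point out the discrepancy.
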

 
 To prove that our result is tight,  we need a criterion for uniqueness of $p$-measures with conditions on the parameters comparable to Theorem \ref{theo:BK}. 
 Known criteria for uniqueness \citep{johansson/oberg/pollicott/2012, fernandez/maillard/2005} don't give such conditions. Therefore, the existence of a sharp transition from uniqueness to non-uniqueness regime for the BK model still remains an interesting open problem. \\

 Before stating Theorem \ref{theo:perturbation}, we need to introduce Orstein's $\bar{d}$-distance \citep{shields/1996}. We say that a measure $\nu$ on $\mathcal{X}\times \mathcal{X}$ is a coupling between $\mu$ and $\mu'$ if for all measurable subsets $\Gamma$ of $\mathcal{X}$ we have $\nu(\Gamma \times \mathcal{X}) = \mu(\Gamma)$ and $\nu(\mathcal{X} \times \Gamma) = \mu'(\Gamma)$. The set of all $T\otimes T$-invariant couplings between $\mu$ and $\mu'$ is denoted by $C(\mu,\mu')$ and the $\bar{d}$-distance between $\mu$ and $\mu'$ is defined by 
 \begin{equation*}
 \bar{d}(\mu,\mu') = \inf_{\nu \in C(\mu,\mu')}\nu(\{(x,x') \in \mathcal{X}\times \mathcal{X}: x_0 \neq x_0'\}).
 \end{equation*}
 
 Let $\tilde{\mu}$ and $\tilde{T}$ be respectively the natural extensions on $A^{\Z}$ of the $g$-measure $\mu$ and the shift operator $T$. We say that this natural extension $(\tilde{\mu},\tilde{T})$ is \emph{Bernoulli} if it is isomorphic to a Bernoulli shift.

\begin{theo} 
\label{theo:perturbation}
 Let $(g_j)_{j \geq 0}$ and $(g_j')_{j \geq 0}$ be two sequences of functions in $\mathcal{M}$ both converging to $g \in \mathcal{G}$ in $\mathcal{C}(\mathcal{X})$. Let $\mu_j$ and  $\mu_j'$ be the unique associated $g_j$ and  $g_j'$-measures. If there exists an integer $k\geq0$ such that 
 \begin{equation} \label{eq:perturbation}
 \sum_{j \geq k}\bar{d}(\mu_j, \mu_{j+1})  +  \sum_{j \geq k}\bar{d}(\mu_j', \mu_{j+1}')  < \bar{d}(\mu_k,\mu_k'),
\end{equation}
then there exist at least two distinct $g$-measures $\mu$ and $\mu'$. Moreover, the natural extensions of both $g$-measures are Bernoulli.
\end{theo}

The main advantage of Theorem \ref{theo:perturbation} is that we need to know nothing \textit{a priori} about the non-unique $g$-measures.  The only requirement is  a good control of the coupling between the Markov approximations.

We state below that the converse of Theorem \ref{theo:perturbation} holds for the important class of binary attractive functions $g\in \mathcal{G}$, which includes for example the Bramson-Kalikow model  \citep{bramson/kalikow/1993}. 

\begin{theo} \label{theo:attractive}
Let $A = \{-1, +1\}$. If $g \in \mathcal{G}$ is attractive, then there exist multiple $g$-measures if and only if there exist two sequences $(g_j)_{j \geq 0}$ and $(g_j')_{j \geq 0}$ of functions in $\mathcal{M}$ both converging to $g \in \mathcal{G}$ in $\mathcal{C}(\mathcal{X})$ such that the associated $g_j$ and $g_j'$-measures $\mu_j$ and $\mu_j'$ satisfy, for some $k \geq 0$, the inequality (\ref{eq:perturbation}).
\end{theo}

\section{Couplings and perfect simulations} \label{sec:couplings}

The proof of  Theorem \ref{theo:BK} will use Theorem \ref{theo:perturbation} which involves the $\bar{d}$-distance between Markov chains. Therefore, we will construct several couplings and Markov chains. The constructions are  conceptually straightforward but tedious to write, thus for convenience of the reader we will  define all the constructions in the present section. 

All the stationary measures  needed in the proof of Theorem \ref{theo:BK} will be simultaneously constructed using only a single sequence  ${\bf U}:=(U_j)_{j \in \Z}$ of i.i.d. r.v.'s uniformly distributed in $[0,1)$. We let $(\Omega,\mathcal{F},\mathbb{P})$ denote the probability space corresponding to the i.i.d. sequence ${\bf U}$, and $\mathbb{E}$ the expectation under $\mathbb{P}$. 

The first notion that we need to introduce is that of \emph{coupling from the past} (CFTP) algorithm.

\vspace{1cm}
\paragraph{{\bf Coupling from the past}}

The idea of the construction is the following. First, for $g \in \mathcal{G}$, we associate an \emph{update function} $F: [0,1) \times A^{\Z_-}\rightarrow A$ that satisfies $\mathbb{P}(F(U_{0},x)=a)=g(ax)$ for any $x\in \mathcal{X}$ and $a \in A$. 
For any pair of integers $i,j$ such that $-\infty<i\le j <+\infty$, let $F_{\{j,i\}}(U_{i}^{j},x)\in A^{j-i+1}$ be the sample obtained by \emph{applying recursively $F$ on the fixed past $x$}, i.e, let $F_{\{i,i\}}(U_{i},x):=F(U_{i},x)$ and for any $j > i$
\[
F_{\{j,i\}}(U_{i}^{j},x):=F(U_{j}, F_{\{j-1,i\}}(U_{i}^{j-1},x))F_{\{j-1,i\}}(U_{i}^{j-1},x).
\]
Secondly, define $F_{[i,i]}(U_{i},x):=F(U_{i},x)$ and 
\begin{equation}\label{eq:F}
F_{[j,i]}(U_{i}^{j},x)=F\left(U_{j}, F_{\{j-1,i\}}(U_{i}^{j-1},x)x\right).
\end{equation}
$F_{[j,i]}(U_{i}^{j},x)$ is the last symbol of the sample $F_{\{j,i\}}(U_{i}^{j},x)$.

With these definitions, for all $x \in \mathcal{X}$ we can construct the sequence $\left(X_{j}^{(x)} \right)_{j \geq 1}$ defined by
\[
F_{[j,1]}(U_{1}^{j},x):= X^{(x)}_{j},
\]
 which is the stochastic process  starting with a fixed past $x \in \mathcal{X}$ and updated according to $g$. 

Now we can define the notion of perfect simulation by coupling from the past. Let $\theta$ be the \emph{coalescence time}  defined by
\begin{equation}\label{eq:theta}
\theta:= \min\left\{i \geq 0:F_{[0,-i]}(U_{-i}^{0},x) = F_{[0,-i]}(U_{-i}^{0},y)\,\,\,\textrm{for all }\,x, y \in \mathcal{X}\right\}.
\end{equation}
It can be proved (see \cite{propp/wilson/1996, comets/fernandez/ferrari/2002, desantis/piccioni/2010} for instance) that if $\theta$ is $\mathbb{P}$-a.s. finite then there is a unique process $(X_j)_{j \in \Z}$ compatible with $g$, such that, 
\[
F_{[0, -\theta]}(U_{-\theta}^{0},x)\stackrel{\mathcal{D}}{=} X_{0}\,\,\,\,\forall x \in \mathcal{X}.
\]
Therefore, when an update function $F$ and a $\mathbb{P}$-a.s. finite $\theta$ exist, we say that there exists a CFTP algorithm that \emph{ perfectly simulates} $(X_j)_{j \in  \Z}$. Observe that we are considering the bi-infinite stationary process on $\Z$ rather than the process restricted on $\Z_+$, as this is more convenient for the proof of Theorem \ref{theo:BK}.

We note that \cite{gallo/takahashi/2011} proved that an attractive $g$-measure is unique if and only if it can be perfectly simulated through a CFTP algorithm. Therefore,  the non-unique $p$-measures for the BK model considered in the present paper cannot be simulated through a CFTP.  Instead, in the present article we use CFTP to make a simultaneous construction of all the Markov chains obtained by truncating the initial function $p$. These truncations are introduced in the next paragraph.

\vspace{1cm}
\paragraph{{\bf Update function for the truncating Markov kernels}} We will consider different Markov kernels in the proof of Theorem \ref{theo:BK}. They are truncations of order $m_{k}$ of the Bramson-Kalikow's function $p \in \mathcal{C}(\mathcal{X})$ defined in (\ref{eq:BKdef}). Let $x\in \mathcal{X}$, $p_{[0]} \in \mathcal{M}_{0}$ be defined by $p_{[0]}(x) = (1-\epsilon){\bf 1}\{x_0 > 0\}+\epsilon{\bf 1}\{x_0 < 0\}$, and $p_{[m_j]} \in \mathcal{M}_{m_j}$ be defined as in (\ref{eq:BKpart}). For $ l > k \geq 0$, consider the following $m_k$-th order Markov kernels
\begin{equation}\label{eq:MarkovApp1}
p_{k}(x) = \sum_{j = 1}^{k} \lambda_j p_{[m_j]}(x)+\sum_{j = k+1}^\infty \lambda_j p_{[0]}(x),
\end{equation}
\begin{equation} \label{eq:MarkovApp2}
p'_{k}(x) = \sum_{j = 1}^{k} \lambda_j p_{[m_j]}(x)+\sum_{j = k+1}^\infty \lambda_j (1-p_{[0]}(x)),
\end{equation}
\begin{equation} \label{eq:MarkovApp3}
q_{k,l}(x) = \sum_{j = 1}^{k} \lambda_j p_{[m_j]}(x)+\sum_{j = k+1}^l\lambda_{j}(1-p_{[0]}(x))+\sum_{j = l+1}^\infty \lambda_j p_{[0]}(x),
\end{equation}
\begin{equation} \label{eq:MarkovApp4}
q_{k,l}'(x) = \sum_{j = 1}^{k} \lambda_j p_{[m_j]}(x)+\sum_{j = k+1}^l \lambda_{j}p_{[0]}(x)+\sum_{j = l+1}^\infty \lambda_j (1-p_{[0]}(x)),
\end{equation}
where $\sum_{j = 1}^{0}x_j$ means that the summand is zero.

Defining $\bar{\lambda}_0 := 2\epsilon$ and $\bar{\lambda}_j:=\lambda_j(1-2\epsilon)$ for $j\geq 1$, we can respectively rewrite (\ref{eq:MarkovApp1}) and (\ref{eq:MarkovApp3}) as
\begin{equation*}
p_{k}(x)=\bar{\lambda}_0\frac{1}{2}+\sum_{j=1}^{k}\bar{\lambda}_{j}{\bf1}\left\{x_0\sum_{i=1}^{m_j} x_{-i} > 0\right\}+\sum_{j\ge k+1}\bar{\lambda}_{j}\left(\frac{1+x_0}{2}\right),
\end{equation*}
\begin{equation*}
q_{k,l}(x)=\bar{\lambda}_0\frac{1}{2}+\sum_{j=1}^{k}\bar{\lambda}_{j}{\bf1}\left\{x_0\sum_{i=1}^{m_j} x_{-i} > 0\right\}+\sum_{j= k+1}^l\bar{\lambda}_{j}\left(\frac{1+x_0}{2}\right)+\sum_{j\ge l+1}\bar{\lambda}_{j}\left(\frac{1-x_0}{2}\right).
\end{equation*}
Similar equations hold for (\ref{eq:MarkovApp2}) and (\ref{eq:MarkovApp4}).

Now, for any past $x \in \mathcal{X}$, consider the intervals 
\begin{equation} \label{eq:intervals}
I_0(-1):=[0,\,\epsilon[\,,\,\,\,I_0(+1)=[\epsilon,\,2\epsilon[\,\,\,\textrm{and}\,\,\,\,I_{j}=\left[\sum_{i=0}^{j-1}\bar{\lambda}_{i},\,\sum_{i=0}^{j}\bar{\lambda}_{i}\right[,  \,\,\,\,\,j\ge1.
\end{equation}
We observe that the lengths $|I_0(-1)|=|I_0(+1)|=\epsilon$ and for $j \geq 1$, $|I_{j}|=\bar{\lambda}_{j}$.

It is natural to consider the following update functions for the Markov kernels $p_{k}$ and $q_{k,l}$ respectively. 
\begin{align*}
F^{p_{k}}(U_{0},x)=&\,\sum_{a\in A}a{\bf 1}\{U_{0}\in I_0(a)\}+\sum_{a\in A}\sum_{j=1}^{k}a{\bf 1}\{U_{0}\in I_{j}\}{\bf1}\left\{a\sum_{ i=1}^{m_j} x_{-i} > 0\right\}\\
&+\sum_{a\in A}\sum_{j\ge k+1}a{\bf 1}\{U_{0}\in I_{j}\}\left(\frac{1+a}{2}\right),
\end{align*}
and
\begin{align*}
F^{q_{k,l}}(U_{0},x)=&\,\sum_{a\in A}a{\bf 1}\{U_{0}\in I_0(a)\}+\sum_{a\in A}\sum_{j=1}^{k}a{\bf 1}\{U_{0}\in I_{j}\}{\bf1}\left\{a\sum_{i=0}^{m_j-1} x_{-i} > 0\right\}\\
&+\sum_{a\in A}\sum_{j= k+1}^la{\bf 1}\{U_{0}\in I_{j}\}\left(\frac{1-a}{2}\right)+\sum_{a\in A}\sum_{j\ge l+1}a{\bf 1}\{U_{0}\in I_{j}\}\left(\frac{1+a}{2}\right).
\end{align*}
We can define analogous update functions  for $p_{k}'$ and $q_{k,l}'$. 

Let $\underline{+1}, \underline{-1} \in \mathcal{X}$ be defined by $\underline{+1}_j = 1$ and $\underline{-1}_j = -1$ for $j\leq 0$. We define the \textit{coalescence time}
\begin{align*}
\theta^{p_k}:&= \min\left\{i \geq 0:F^{p_k}_{[0,-i]}(U_{-i}^{0},x) = F^{p_k}_{[0,-i]}(U_{-i}^{0},y)\,\,\,\textrm{for all }\,x, y \in \mathcal{X}\right\}\\
&= \min\left\{i \geq 0:F^{p_k}_{[0,-i]}(U_{-i}^{0},\underline{+1}) = F^{p_k}_{[0,-i]}(U_{-i}^{0},\underline{-1})\right\},
\end{align*}
where the last equality is a direct consequence of the attractiveness of $p_k$. We substitute in the above definitions $p_k$  by $p'_k$, $q_{k,l}$, or $q_{k,l}'$ to define $\theta^{p'_k}$, $\theta^{q_{k,l}}$, and $\theta^{q_{k,l}'}$.

We also define, for any $i\in\mathbb{Z}$ and $k\ge1$, the \textit{regeneration time} of order $k$
\[
\eta_k:=\min\{i\geq m_{k}-1:U_{-j}\in I_0(-1)\cup I_0(+1)\,,\,\,j=i-m_k+1,\ldots,i\}.
\]

\vspace{.5cm}
\paragraph{{\bf Couplings between the chains and an upperbound for $\theta^{p_k}$ and $\eta_{k}$}}

We couple all the chains together constructing them simultaneously using the CFTP algorithm with same sequence ${\bf U}$ and the respective update functions. Consequently, the coupling law is always $\mathbb{P}$, \textit{i.e.}, the product law of ${\bf U}$. We also use the same symbol to indicate the marginal process and coupled process, when there is no ambiguity. 	

In what follows, we collect some lemmas that will be used in the proof Theorem \ref{theo:BK}.
Let us give an upper bound on the expectation of the coalescence and regeneration times that hold for $p_k$, $p'_k$, $q_k$, and $q_k'$. First, observe that by construction, 
\begin{equation*}
F^{p_k}_{[0,-\eta_{k}]}(U_{-\eta_{k}}^{0},\underline{+1}) = F^{p_k}_{[0,-\eta_{k}]}(U_{-\eta_k}^{0},\underline{-1})
\end{equation*}
$\P$-a.s. and, therefore,
\begin{equation*}
\P(\eta_{k} \geq \theta^{p_k}) = 1.
\end{equation*}
The same holds for $p'_k$, $q_{k,l}$, and $q_{k,l}'$.
Now, we have the following lemma.

\begin{lemma} \label{lemma:uppertime}
Let $\eta_{k}$ be the regeneration time of order $k$. We have that
\[
\mathbb{E}[\theta^{p_k}]
\leq \mathbb{E}[\eta_{k}]
\leq\frac{m_{k}}{(2\epsilon)^{m_{k}}}.
\]
The same bound holds for $\theta^{p_k'}$, $\theta^{q_{k,l}}$,  and $\theta^{q_{k,l}'}$.
\end{lemma}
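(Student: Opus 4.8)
The plan is to reduce the bound on $\mathbb{E}[\theta^{p_k}]$ to a bound on $\mathbb{E}[\eta_k]$, and then to bound $\mathbb{E}[\eta_k]$ by a direct computation on the i.i.d. sequence $\mathbf{U}$. The first inequality $\mathbb{E}[\theta^{p_k}]\le\mathbb{E}[\eta_k]$ is already in hand: it follows from the pathwise domination $\mathbb{P}(\eta_k\ge\theta^{p_k})=1$ observed just before the statement, which in turn comes from the fact that once we see $m_k$ consecutive values of $U$ falling in $I_0(-1)\cup I_0(+1)$, the update function $F^{p_k}$ ignores the past entirely (only the $p_{[0]}$ and "drift" terms are active on those coordinates), so coalescence has occurred. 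The same pathwise argument applies verbatim to $p_k'$, $q_{k,l}$, $q_{k,l}'$, since all four kernels share the same interval structure $I_0(\pm1),I_j$ and differ only in how the symbol is chosen when $U_0\in I_j$ with $j\ge1$; hence it suffices to bound $\mathbb{E}[\eta_k]$.

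For the remaining inequality $\mathbb{E}[\eta_k]\le m_k/(2\epsilon)^{m_k}$, I would recognize $\eta_k$ as a success-run waiting time: set $p:=\mathbb{P}(U_0\in I_0(-1)\cup I_0(+1))=|I_0(-1)|+|I_0(+1)|=2\epsilon$ (using the interval lengths recorded after \eqref{eq:intervals}), call an index $-j$ "good" if $U_{-j}\in I_0(-1)\cup I_0(+1)$, and observe that $\eta_k$ (up to the harmless shift by $m_k-1$ in its definition) is the first time we have observed $m_k$ consecutive good indices. The expected waiting time for a run of $\ell$ successes, each of probability $p$, in i.i.d. Bernoulli trials is the classical quantity $\frac{1-p^{\ell}}{p^{\ell}(1-p)}=\sum_{i=1}^{\ell}p^{-i}$, which is at most $\ell\,p^{-\ell}$. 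Substituting $\ell=m_k$ and $p=2\epsilon$ gives $\mathbb{E}[\eta_k]\le m_k(2\epsilon)^{-m_k}$, as claimed. One can either quote the standard geometric/renewal formula for success runs or reprove it in two lines via a one-step decomposition (condition on the position of the first non-good index within the first $m_k$ trials), which is the route I would take to keep the paper self-contained.

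The only genuinely delicate point — and the one I would be most careful about — is matching the combinatorial definition of $\eta_k$, namely $\eta_k=\min\{i\ge m_k-1: U_{-j}\in I_0(-1)\cup I_0(+1),\ j=i-m_k+1,\ldots,i\}$, with the "run of $m_k$ successes" heuristic, and checking that this really does force coalescence of $F^{p_k}_{[0,-\eta_k]}$ from the two extremal pasts $\underline{+1},\underline{-1}$ (equivalently, from all pasts, by attractivity). Concretely: if $U_{-j}\in I_0(-1)\cup I_0(+1)$ for the $m_k$ consecutive indices $-i+m_k-1,\ldots,-i+\ldots$ wait, indices $-(i-m_k+1),\ldots,-i$, then at the first of these steps the symbol produced is $\pm1$ according to which of $I_0(\pm1)$ contains $U$, independently of the past; after $m_k$ such steps the most recent $m_k$ symbols of both coupled trajectories coincide, and since $p_k$ has memory range $m_k$, all subsequent updates agree — so indeed $\eta_k\ge\theta^{p_k}$ and the stated bound transfers. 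Everything else is routine; the indexing bookkeeping in \eqref{eq:theta}, \eqref{eq:F} and the definition of $\eta_k$ is the part that rewards a careful write-up.
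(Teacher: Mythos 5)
Your proof is correct and follows essentially the same route as the paper: both reduce the lemma to the pathwise domination $\P(\eta_k\ge\theta^{p_k})=1$ (recorded just before the statement) and then to a waiting-time bound for a run of $m_k$ consecutive uniforms landing in $I_0(-1)\cup I_0(+1)$, an event of probability $2\epsilon$ per trial. The only (immaterial) difference is in the final computation: you invoke the exact success-run formula $\sum_{i=1}^{m_k}(2\epsilon)^{-i}\le m_k(2\epsilon)^{-m_k}$, whereas the paper partitions time into blocks of length $m_k$ and dominates $\eta_k/m_k$ by a geometric variable of parameter $(2\epsilon)^{m_k}$; both yield the stated bound.
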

\begin{proof} By the definition of $\eta_k$ we have
\[
\mathbb{P}(\eta_{k}\ge n.m_{k})\leq \prod_{i=1}^{n}\mathbb{P}\left(\left\{\bigcap^{-(i-1)m_{k}}_{j=-im_{k}+1}\left\{U_{j}\in \bigcup_{l=0}^{im_{k}-1+j}I_l  \right \}\right\}^{c}\right).
\]
Using the stationarity and independence of ${\bf U}$, we have for $i=1,\ldots,n$
\begin{align*}
\mathbb{P}\left(\bigcap^{-(i-1)m_{k}}_{j=-im_{k}+1}\left\{U_{j}\in \bigcup_{l=0}^{im_{k}-1+j}I_l \right\} \right)&=\prod_{j=1}^{m_{k}}\mathbb{P}\left(U_{j}\in \bigcup_{l=0}^{j}I_l \right)\\
&=\prod_{j=1}^{m_{k}}\mathbb{P}\left(U_{0}\in \bigcup_{l=0}^{j}I_l \right).
\end{align*}
A simple upper bound is $\prod_{j=1}^{m_{k}}\mathbb{P}(U_{0}\in \bigcup_{l=0}^{j}I_l )\leq (2\epsilon)^{m_k}$. This yields 
\[
\mathbb{E}[\eta_{k}] \leq m_{k}\sum_{n\ge1}\mathbb{P}(\eta_{k}\ge n.m_{k})\leq m_{k}\sum_{n\ge1}(1-(2\epsilon)^{m_k})^{n}\leq \frac{m_k}{(2\epsilon)^{m_{k}}}.
\]

\end{proof}

\begin{lemma}
\label{lemma:lowerbound}
Let $k < l$ and $(Y^{k,l}_j)_{j\in \Z}$ be the stationary process compatible with $q_{k,l}$. If  $\sum_{j \geq l+1}\lambda_j > \sum_{j=k+1}^{l}\lambda_{j}$ then 
\begin{equation}
\label{EspLow} 
\E[Y^{k,l}_0] \geq \Big(1-2\epsilon \Big)\left(\sum_{j \geq l+1}\lambda_j - \sum_{j=k+1}^{l}\lambda_{j}\right) > 0.
\end{equation}

\end{lemma}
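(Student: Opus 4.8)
The plan is to compute $\E[Y^{r,k+1}_0]$ exactly and then estimate from below the single term that is not already in the desired form. Write $Y=Y^{r,k+1}$ and let $\mu$ be its stationary law, which is unique since $q_{r,k+1}$ is strictly positive and of finite order $m_r$, hence an ergodic element of $\mathcal{M}_{m_r}\cap\mathcal{G}$; recall $\bar\lambda_0=2\epsilon$ and $\bar\lambda_j=\lambda_j(1-2\epsilon)$ for $j\ge1$. Applying the compatibility relation (\ref{eq:invariant}) to $f(x)=x_0$ gives
\begin{equation*}
\E[Y_0]=\int_{\mathcal{X}}\big(q_{r,k+1}((+1)x)-q_{r,k+1}((-1)x)\big)\,d\mu(x).
\end{equation*}
Using the decomposition (\ref{eq:MarkovApp3}) together with $p_{[m_j]}((+1)x)-p_{[m_j]}((-1)x)=(1-2\epsilon)\,\mathrm{sgn}\big(\sum_{i=1}^{m_j}x_{1-i}\big)$ (no ties, as $m_j$ is odd) and $p_{[0]}((+1)x)-p_{[0]}((-1)x)=1-2\epsilon$, and then integrating against $\mu$, one obtains by stationarity
\begin{equation*}
\E[Y_0]=\sum_{j=1}^{r}\bar\lambda_j\,\E\!\left[\mathrm{sgn}\!\left(\sum_{i=1}^{m_j}Y_i\right)\right]+\sum_{j\ge k+2}\bar\lambda_j-\sum_{j=r+1}^{k+1}\bar\lambda_j.
\end{equation*}
Since $\sum_{j\ge k+2}\bar\lambda_j-\sum_{j=r+1}^{k+1}\bar\lambda_j=(1-2\epsilon)\big(\sum_{j>k+1}\lambda_j-\sum_{j=r+1}^{k+1}\lambda_j\big)$, which is positive by hypothesis, the bound (\ref{EspLow}) follows once we show $\E[\mathrm{sgn}(\sum_{i=1}^{m_j}Y_i)]\ge0$ for every $j\le r$.

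For this I would compare $q_{r,k+1}$ with the kernel $\hat q$ obtained from it by the global spin flip $x\mapsto-x$. The kernel $\hat q$ is again attractive, since the spin flip preserves attractivity (in fact $\hat q$ coincides with the kernel $q'_{r,k+1}$ of (\ref{eq:MarkovApp4})), and its unique stationary process $\hat Y$ satisfies $\hat Y\eqd-Y$. A direct computation with (\ref{eq:MarkovApp3})--(\ref{eq:MarkovApp4}) shows that $q_{r,k+1}((+1)x)$ and $\hat q((+1)x)$ differ by a past-independent constant, namely
\begin{equation*}
q_{r,k+1}((+1)x)-\hat q((+1)x)=\sum_{j\ge k+2}\bar\lambda_j-\sum_{j=r+1}^{k+1}\bar\lambda_j\ge0
\end{equation*}
for all $x\in\mathcal{X}$, by hypothesis; equivalently, on $A=\{-1,+1\}$ the attractive kernel $q_{r,k+1}$ dominates the attractive kernel $\hat q$ in the stochastic order. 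By the standard monotone coupling of attractive Markov chains --- which can be realized by running the CFTP algorithm of Section \ref{sec:couplings} for the two kernels with a common sequence ${\bf U}$ and order-preserving update functions, the coalescence being $\P$-a.s.\ finite by Lemma \ref{lemma:uppertime} --- there is a stationary coupling of $Y$ and $\hat Y$ with $Y_i\ge\hat Y_i$ for all $i\in\Z$, $\P$-a.s. In particular $\sum_{i=1}^{m_j}Y_i\ge\sum_{i=1}^{m_j}\hat Y_i$ in this coupling, and since $\sum_{i=1}^{m_j}\hat Y_i$ has the law of $-\sum_{i=1}^{m_j}Y_i$ we get $\P(\sum_{i=1}^{m_j}Y_i>0)\ge\P(\sum_{i=1}^{m_j}Y_i<0)$, i.e.\ $\E[\mathrm{sgn}(\sum_{i=1}^{m_j}Y_i)]\ge0$. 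Substituting into the identity for $\E[Y_0]$ gives (\ref{EspLow}), the strict positivity coming from the hypothesis $\sum_{j>k+1}\lambda_j>\sum_{j=r+1}^{k+1}\lambda_j$.

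The main obstacle is the stochastic-domination step: one has to be a little careful in setting up the order-preserving coupling of the two attractive finite-order chains, since the update functions written down in Section \ref{sec:couplings} realize $q_{r,k+1}\succeq\hat q$ only after an appropriate rearrangement of the intervals (\ref{eq:intervals}) feeding the $p_{[0]}$-blocks; alternatively one bypasses CFTP altogether and invokes the classical fact that if $g_1,g_2$ are attractive and $g_1((+1)x)\ge g_2((+1)x)$ for all $x$, then the corresponding stationary measures satisfy $\mu_{g_1}\succeq\mu_{g_2}$. Everything else --- the identity for $\E[Y_0]$ and the constant-term computation --- is routine bookkeeping with the explicit form of $q_{r,k+1}$.
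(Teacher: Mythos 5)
Your proof is correct, and while it rests on the same central comparison as the paper's --- playing the $q_{r,k+1}$-chain against the chain of the spin-flipped kernel $q'_{r,k+1}$, whose stationary process has the law of $-Y^{r,k+1}$, by means of an order-preserving coupling --- the execution is genuinely different. The paper uses a monotone (rearranged-interval) coupling to write $\E[Y_0]=\P(Y_0=1)-\P(Z_0=1)=\P(Y_0\neq Z_0)$ and then lower-bounds the disagreement probability by the mass of the set of values of $U_0$ on which the two update functions disagree deterministically, the final inequality being asserted as ``a straightforward consequence of the construction.'' You instead derive an exact identity from the compatibility equation,
$\E[Y_0]=\sum_{j=1}^{r}\bar\lambda_j\,\E[\mathrm{sgn}(\sum_{i=1}^{m_j}Y_i)]+(1-2\epsilon)\big(\sum_{j>k+1}\lambda_j-\sum_{j=r+1}^{k+1}\lambda_j\big)$,
which isolates the claimed bound algebraically and reduces the lemma to the single inequality $\E[\mathrm{sgn}(\sum_{i=1}^{m_j}Y_i)]\ge 0$ for $j\le r$; that inequality you obtain from the stochastic domination $Y\succeq -Y$, justified by attractivity of both kernels together with the constant nonnegative gap $q_{r,k+1}((+1)x)-q'_{r,k+1}((+1)x)$. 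Your identity and the sign computations ($p_{[m_j]}((+1)x)-p_{[m_j]}((-1)x)=(1-2\epsilon)\,\mathrm{sgn}(\sum_{i=0}^{m_j-1}x_{-i})$ with no ties since $m_j$ is odd) check out, and your caveat that the interval-based update functions of Section \ref{sec:couplings} do not directly realize the pointwise order --- which is precisely why the paper introduces the rearranged intervals $I'_j$, and why you fall back on the threshold representation or the classical domination fact for attractive kernels --- correctly identifies the one delicate point common to both arguments. What your route buys is that the vague ``straightforward consequence'' step is replaced by an explicit, self-contained nonnegativity claim; what the paper's route buys is that it stays entirely inside the CFTP/update-function framework it reuses throughout the proof of Theorem \ref{theo:BK}.
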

\begin{proof}
 Let $(Z^{k,l}_j)_{j \in \Z}$ be the stationary process compatible with $q_{k,l}'$ we observe that
\begin{align*}
\E[Y^{k,l}_0] &= \P(Y^{k,l}_0 = 1)-\P(Y^{k,l}_0 = -1) \\
&= \P(Y^{k,l}_0 = 1)-\P(Z^{k,l}_0 = 1).
\end{align*}
Now, we want to construct a maximal coupling between $(Y^{k,l}_j)_{j\in \Z}$ and $(Z^{k,l}_j)_{j \in \Z}$. For this we define an update function for $q'_{k,l}$ using a set of intervals slightly different from the intervals defined in (\ref{eq:intervals}). We have
\begin{equation} \label{eq:intervals}
I'_0(-1):=[0,\,\epsilon[\,,\,\,\,I'_0(+1)=[\epsilon,\,2\epsilon[\,\,\,\textrm{and}\,\,\,\,I'_{j}=\left[\sum_{i=0}^{j-1}\bar{\lambda}_{i},\,\sum_{i=0}^{j}\bar{\lambda}_{i}\right[,  \,\,\,\,\,\text{for}\;\;k \geq j\geq 1,
\end{equation}
\begin{equation} \label{eq:intervals}
I'_{j}=\left[\sum_{i=0}^{j-1}\bar{\lambda}_{i} + \sum_{i\geq l+1}\bar{\lambda}_{i},\,\sum_{i=0}^{j}\bar{\lambda}_{i}+\sum_{i\geq l+1}\bar{\lambda}_{i}\right[,  \,\,\,\,\,\text{for}\;\; l \geq j\geq k+1,
\end{equation}
and
\begin{equation} \label{eq:intervals}
I'_{j}=\left[\sum_{i=0}^{k}\bar{\lambda}_{i} + \sum_{i = l+1}^{j}\bar{\lambda}_{i},\,\sum_{i=0}^{k}\bar{\lambda}_{i}+\sum_{i = l+1}^j\bar{\lambda}_{i}\right[,  \,\,\,\,\,\text{for}\;\; j\geq l,
\end{equation}
where $\sum_{i = l+1}^{l}\bar{\lambda}_{i} = 0$. The update function for $q'_{k,l}$  is then defined by
\begin{align*}
H^{q'_{k,l}}(U_{0},x)=&\,\sum_{a\in A}a{\bf 1}\{U_{0}\in I'_0(a)\}+\sum_{a\in A}\sum_{j=1}^{k}a{\bf 1}\{U_{0}\in I'_{j}\}{\bf1}\left\{a\sum_{i=0}^{m_j-1} x_{-i} > 0\right\}\\
&+\sum_{a\in A}\sum_{j= k+1}^{l}a{\bf 1}\{U_{0}\in I'_{j}\}\left(\frac{1-a}{2}\right)+\sum_{a\in A}\sum_{j\ge l+1}a{\bf 1}\{U_{0}\in I'_{j}\}\left(\frac{1+a}{2}\right).
\end{align*}
Observe that this update function is different from $F^{q'_{r,k+1}}$, which uses the intervals defined in (\ref{eq:intervals}).

By construction
\begin{equation*}
 \P(Y^{k,l}_0 = 1)-\P(Z^{k,l}_0 = 1) = \P(Y^{k,l}_0 \neq Z^{k,l}_0).
\end{equation*}

Now, the following lower bound is an immediate consequence of the construction of the coupling
\begin{equation*}
\P(Y^{k,l}_0 \neq Z^{k,l}_0) \geq \Big(1-2\epsilon \Big)\left(\sum_{j \geq l+1}\lambda_j - \sum_{j=k+1}^{l}\lambda_{j}\right).
\end{equation*}

\end{proof}

\section{Proof of Theorem \ref{theo:BK} and Corollaries \ref{prop:application}, \ref{prop:application2}} \label{sec:prooftheo2}

The proof of Theorem \ref{theo:BK} is based on the results of the last section, Theorem \ref{theo:perturbation} which is proved in the next section, and the following lemma.


\begin{lemma}
\label{Lemconc}
Let $k < l$  and $(Y^{k,l}_j)_{j\in \Z}$ be the stationary process compatible with $q_{k,l}$. For all $l>k > 0$ and $j \geq 1$, we have
\begin{equation} \label{eq:concfund}
\P\left(\left|\frac{1}{m_{l}}\sum_{i = 1}^{m_{l}}Y^{k,l}_{i}  -\E[Y^{k,l}_0]\right| \geq  \frac{\E[Y^{k,l}_0]}{2}\right) \leq  2\exp\left(-\frac{m_{l}\E[Y^{k,l}_0]^2}{8\Big(1+\E[\theta^{q_{k,l}}]\Big)^2}\right).
\end{equation}
\end{lemma}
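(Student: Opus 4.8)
The plan is to realise the stationary chain $(Y^{r,k+1}_j)_{j\in\Z}$ on the probability space $(\Omega,\mathcal F,\P)$ of Section~\ref{sec:couplings} via the CFTP construction attached to $q_{r,k+1}$, so that each $Y^{r,k+1}_l$ is a measurable function of the finitely many innovations $U_{l-\theta_l},\ldots,U_l$, where $\theta_l$ is distributed as the coalescence time $\theta^{q_r}$ associated with $q_{r,k+1}$; in particular $|Y^{r,k+1}_l|\le 1$. Writing $S:=\sum_{l=1}^{m_{k+1}}Y^{r,k+1}_l$ and $\mu:=\E[Y^{r,k+1}_0]$ (which, in the regime of interest, is strictly positive by Lemma~\ref{lemma:lowerbound}, so that the event considered is the genuine deviation event $\{|S-m_{k+1}\mu|\ge m_{k+1}\mu/2\}$), it suffices to establish a Hoeffding-type concentration bound for $S$.

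The core of the argument is a uniform bounded-difference estimate for the Doob martingale of $S$ along the sites. Set $\mathcal F_l:=\sigma(Y^{r,k+1}_1,\ldots,Y^{r,k+1}_l)$ for $0\le l\le m_{k+1}$ and $M_l:=\E[S\mid\mathcal F_l]$, so $M_0=m_{k+1}\mu$ and $M_{m_{k+1}}=S$. The increment $M_l-M_{l-1}$ equals $h(Y^{r,k+1}_{\le l})-\E[h(Y^{r,k+1}_{\le l})\mid\mathcal F_{l-1}]$, where $h(y_{\le l}):=\E[S\mid Y^{r,k+1}_{\le l}=y_{\le l}]$; hence, conditionally on $\mathcal F_{l-1}$, the increment ranges over an interval of length at most the oscillation of $h$ in its last coordinate. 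Decompose $h(y_{\le l})=\sum_{j=1}^{m_{k+1}}\E[Y^{r,k+1}_j\mid Y^{r,k+1}_{\le l}=y_{\le l}]$: the terms with $j<l$ do not depend on $y_l$; the term $j=l$ contributes oscillation at most $2$; and for $j>l$ one couples two copies of the chain, driven by the same innovations and coinciding at all sites below $l$ but possibly differing at site $l$, which coincide at a site $j$ as soon as $j-l$ exceeds a copy of $\theta^{q_r}$ (this is exactly what the definition of the coalescence time says, since $\theta^{q_r}$ dominates the time for all pasts to merge). Thus $\P(\text{the two copies differ at }j)\le\P(\theta^{q_r}\ge j-l)$, and summing over $j>l$ bounds this contribution by $2\E[\theta^{q_r}]$. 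Altogether the range of $M_l-M_{l-1}$ is at most $2(1+\E[\theta^{q_r}])$, uniformly in $l$.

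Feeding this into the Hoeffding--Azuma inequality in the form $\P(|M_{m_{k+1}}-M_0|\ge x)\le 2\exp\!\big(-2x^2/\sum_{l=1}^{m_{k+1}}c_l^2\big)$ with $c_l=2(1+\E[\theta^{q_r}])$ gives $\P(|S-m_{k+1}\mu|\ge x)\le 2\exp\!\big(-x^2/(2\,m_{k+1}(1+\E[\theta^{q_r}])^2)\big)$. Taking $x=m_{k+1}\mu/2$ and simplifying yields precisely (\ref{eq:concfund}).

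I expect the main obstacle to be the coupling estimate of the second paragraph, i.e.\ checking carefully that a one-site discrepancy propagates for a number of steps stochastically dominated by $\theta^{q_r}$ itself (and not merely by the cruder regeneration time $\eta_r$), which is where the explicit CFTP construction of $q_{r,k+1}$ and the precise definition of $\theta^{q_r}$ from Section~\ref{sec:couplings} really enter; once that is in place, the passage to (\ref{eq:concfund}) is just bookkeeping of the constants, the factor $8$ arising from the product of the $2$ in Hoeffding--Azuma, the $4$ coming from $c_l^2=4(1+\E[\theta^{q_r}])^2$, and the $1/4$ coming from $x=m_{k+1}\mu/2$.
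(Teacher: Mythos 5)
Your proof is correct and arrives at exactly the constant in (\ref{eq:concfund}), but it takes a different route from the paper. The paper does not run the Doob martingale by hand: it invokes Theorem~1 of \citet{chazottes/collet/kulske/regig/2000}, a packaged concentration inequality of the form $\P(|f-\E f|\ge t)\le 2\exp(-2t^2/(\|\bar D\|_2^2\|\delta f\|_2^2))$, where $\bar D_{i,j}$ is a coupling matrix measuring the influence of coordinate $i$ on coordinate $j$. The quantitative input is the same in both arguments: the paper bounds the entries of $\bar D$ by tails of the coalescence time $\theta^{q_r}$ (so that row sums are at most $1+\E[\theta^{q_r}]$, whence $\|\bar D\|_2\le 1+\E[\theta^{q_r}]$ by Cauchy--Schwarz), while you bound the martingale increment by $2(1+\E[\theta^{q_r}])$ via the same tail sum $\sum_{i\ge 0}\P(\theta^{q_r}>i)=\E[\theta^{q_r}]$. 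Combined with $\|\delta f\|_2^2=4/m_{k+1}$ on one side and $c_l^2=4(1+\E[\theta^{q_r}])^2$ on the other, the two computations are arithmetically identical. What your version buys is self-containedness (no external theorem, and no need for the attractivity reduction to the pasts $\underline{+1},\underline{-1}$ that the paper uses to identify $\bar D_{i,j}$ with a coalescence probability); what the paper's version buys is brevity and a cleaner separation between the general concentration principle and the model-specific coupling estimate.

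One point in your second paragraph deserves tightening. With $\mathcal F_l=\sigma(Y^{r,k+1}_1,\ldots,Y^{r,k+1}_l)$, the conditional law of $(Y^{r,k+1}_j)_{j>l}$ given the prefix $(y_1,\ldots,y_{l-1},\pm1)$ is, for $l<m_r$, a mixture over the unobserved coordinates $Y^{r,k+1}_{\le 0}$, and the two posteriors on that unobserved past (for $y_l=+1$ versus $y_l=-1$) are different; so the coupling you describe, in which the two copies ``coincide at all sites below $l$,'' is not literally a coupling of the two conditional laws in that range of $l$. This is harmless: couple the two posteriors on the unobserved past arbitrarily and then drive both copies with the same innovations $U_{l+1},U_{l+2},\ldots$; since $\theta^{q_r}$ is by definition a coalescence time over \emph{all} pairs of pasts, the discrepancy at site $j$ is still contained in the event that the coalescence time anchored at $j$ exceeds $j-l-1$, and the bound $2\E[\theta^{q_r}]$ for the total influence of coordinate $l$ on the future is unchanged. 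With that one-line repair (and the observation, which you make, that $\E[Y^{r,k+1}_0]>0$ under the hypothesis of Lemma~\ref{lemma:lowerbound} so that the deviation level is positive), your argument is complete.
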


\begin{proof}

We will use Theorem 1 of \citet{chazottes/collet/kulske/regig/2000} to obtain an upper bound for the left-hand side of (\ref{eq:concfund}).
Let $(Z_j)_{j \in \Z}$ be a canonical process on $\{-1,+1\}^{\Z}$ with law $\mu$. Let $(Z_j^{(+1\sigma)})_{j \geq 1}$ and $(Z_j^{(-1\sigma)})_{j \geq 1}$ be respectively the processes with laws defined by the conditional distributions $\mu((Z_j)_{j\geq 1}=\cdot \mid Z_{0}=1, Z_{-1}=\sig_{-2},\dots, Z_{-i+1}= \sig_{-i})$ and $\mu((Z_j)_{j \geq 1}=\cdot \mid Z_{0}=-1, Z_{-1}=\sig_{-2},\dots, Z_{-i+1}=\sig_{-i})$.
We denote by $\Q^{\sig}_{i}$ the maximal coupling between the conditional distributions. Now, we introduce the upper-triangular matrix $D^{\sig}$ defined for $1\leq i<j \leq n$ by
\begin{align}
D^{\sig}_{i,i}&:=1\nonumber\\
D^{\sig}_{i,j}&:= \Q^{\sig}_{i}\Big( Z^{(+1\sigma)}_{j} \neq Z^{(-1\sigma)}_{j}  \Big).
\end{align}
Then, we define the matrix $\bar{D}$ as $\bar{D}_{i,j}:=\sup_{\sig\in \{-1,1\}^{n}}D^{\sig}_{i,j}$.
For a given function $f: \{-1,1\}^{n}\to \R$ we define the variation of $f$ at site $i$ with $1\leq i\leq n$ by 
\[
\delta_if:= \sup_{\sig_j=\sig'_j, i\neq j}|f(\sig)-f(\sig')|.
\]

Now, let $n\geq 1$ be arbitrary and assume that $\|\bar{D}\|_{2}<\infty$ and $\|\delta f\|_{2}<\infty$. Then, Theorem 1 in \citet{chazottes/collet/kulske/regig/2000} states that, for all functions $f: \{-1,1\}^{n}\to \R$ (with a slight abuse of notation, we also consider $f$ as a function from $\{-1,1\}^{\Z}\to \R$ which depends only on the first $n$ positive coordinates)  and all $t>0$, we have 
\begin{align}
\label{Collet}
\mu(|f-\E[f]|\geq t)\leq 2 \exp\left(-\frac{2t^2}{\|\bar{D}\|^2_{2}\|\delta f\|^2_{2}}\right).
\end{align} 

In our case, for the process $(Y^{k,l}_j)_{j \in \Z}$ and measure $\P$, we observe that the elements of matrix $\bar{D}_{i,j}$ are bounded from above by the probabilities $\P(\theta^{q_{k,l}} > j)$ for all $j>i\geq 1$. 
To see this we note that
\begin{align*}
\P(\theta^{q_{k,l}} > j) &= \P\left(F^{q_{k,l}}(U_{-i}^{0},\underline{+1}) \neq F^{q_{k,l}}(U_{-i}^{0},\underline{-1}) \;\;\;\text{for}\;\; i =1, \ldots, j  \right)\\
&=\P\left(F^{q_{k,l}}(U_{-j}^{0},\underline{+1}) \neq F^{q_{k,l}}(U_{-j}^{0},\underline{-1}) \right),
\end{align*}
where the last equality is a consequence of the attractiveness of $q_r$.
Now, by the stationarity of ${\bf U}$ we have
\begin{align*}
\P(\theta^{q_{k,l}} > j) &= \P\left(F^{q_{k,l}}_{[j,0]}(U_{0}^{j},\underline{+1}) \neq F^{q_{k,l}}_{[j,0]}(U_{0}^{j},\underline{-1}) \right)\\
&\geq \bar{D}_{i,j}.
\end{align*}
By Cauchy-Schwarz's inequality, we obtain
\begin{align*}
\|\bar{D}u\|^2
=\sum_{i=1}^{n}\Big( \sum_{j=i}^{n}\bar{D}_{i,j}u_j\Big)^2
&=\sum_{i=1}^{n}\Big( \sum_{j=i}^{n}\bar{D}^{1/2}_{i,j}(\bar{D}^{1/2}_{i,j}u_j)\Big)^2\nonumber\\
&\leq \sum_{i=1}^{n}\Big(\sum_{j=1}^{n} \bar{D}_{i,j} \Big)  \Big(\sum_{j=1}^{n} u_j^2 \bar{D}_{i,j}   \Big)\nonumber\\
&\leq \Big(1+\sum_{j=1}^{n}\P(\theta^{q_{k,l}} > j)\Big)\sum_{i=1}^{n}\sum_{j=i}^{n}u_j^2\bar{D}_{i,j}\nonumber\\
&\leq \Big(1+\sum_{j=1}^{n}\P(\theta^{q_{k,l}} > j)\Big)^2\|u\|^2_2
\end{align*}
for all $u\in \R^n$. Taking $n=m_{l}$, we deduce that
\begin{align}
\label{norm1}
\|\bar{D}\|^2_{2}\leq \left(1+\sum_{j=1}^{m_{l}}\P(\theta^{q_{k,l}} > j)\right)^2 \leq \left(1+\E[\theta^{q_{k,l}} ]\right)^2.
\end{align}

Now, taking $f=f\Big(x_1,\dots, x_{m_{l}}\Big)= \frac{1}{m_{l}}\sum_{i=1}^{m_{l}}x_i$ we have $\delta_i f=\frac{2}{m_{l}}$ if $i\in \{1,\dots,m_{l}\}$. Thus, we obtain 
\begin{align}
\label{norm6}
\|\delta g\|^2_2=\sum_{i=1}^{m_{l}}\left(\frac{2}{m_{l}}\right)^2= \frac{4}{m_{l}}.
\end{align}
Applying (\ref{Collet}) and using (\ref{norm1}) and (\ref{norm6}), we obtain
\begin{equation*}
\P\left(\left|\frac{1}{m_{l}}\sum_{j=1}^{m_{l}}Y^{k,l}_j -\E[Y^{k,l}_0]\right| \geq  \frac{\E[Y^{k,l}_0]}{2} \right) \leq  2\exp\left(-\frac{m_{l}\E[Y^{k,l}_0]^2}{8\Big(1+\E[\theta^{q_{k,l}}]\Big)^2}\right).
\end{equation*}
\end{proof}

\begin{proof}[Proof of Theorem \ref{theo:BK}]
\mbox{}\\*

We fix a sequence $(\lambda_j)_{j \geq 1}$ of positive real numbers such that $\sum_{j \geq 1} \lambda_j =1$. Let $r: \{1,2, \ldots\} \rightarrow \Z_+$ such that $r_k < k$  and $\sum_{j\geq k+1}\lambda_j > \sum_{j=r_k+1}^{k}\lambda_{j}, \forall k\geq 1$. The sequence of odd positive integer numbers $(m_j)_{j\geq 1}$ will be chosen afterwards.

Clearly $(p_j)_{j \geq 1}$ and $(p'_j)_{j \geq 1}$ defined in (\ref{eq:MarkovApp1}) and (\ref{eq:MarkovApp2}) converge to the Bramson-Kalikow's $p$ in $\mathcal{C}(\mathcal{X})$.
For all $k\geq 0$, let $\mu_k$ (resp. $\mu'_k$)  be the unique stationary measure compatible with $p_k$ (resp.\ $p'_k$). Observe that for $k=0$, $\mu_0$ (resp. $\mu'_0$) is  a Bernoulli process of parameter $1-\epsilon$ (resp. $\epsilon$).

We will apply Theorem \ref{theo:perturbation} with $k=0$. Since $\bar{d}(\mu_0,\mu'_{0})=1-2\epsilon$, we need to find an explicit sequence $(m_j)_{j \geq 1}$ such that
\begin{equation}
\label{DERT0}
\sum_{k\geq 0}\bar{d}(\mu_k,\mu_{k+1})+\sum_{k\geq 0}\bar{d}(\mu'_k,\mu'_{k+1})< 1-2\epsilon.
\end{equation}
 By symmetry of the kernels $p_k$ and $p'_k$, (\ref{DERT0}) is equivalent to
\begin{equation}
\label{DERT}
2\sum_{k\geq 0}\bar{d}(\mu_k,\mu_{k+1})< 1-2\epsilon.
\end{equation}

Now, our task is to upper bound $\bar{d}(\mu_k,\mu_{k+1})$. For all $k \geq 0$, let $\left(X_j^{k}\right)_{j\in \Z}$ be the stationary process compatible with the measure $\mu_k$.
By definition of the $\db$-distance we have that 
\begin{align}
\label{D1}
\db(\mu_k, \mu_{k+1})\leq \P\left(X^k_0\neq X^{k+1}_0\right),
\end{align}
where $\P$ is the coupling defined in Section \ref{sec:couplings}.
Define for all $i\in \Z_-$, the interval $I_i:=[i-m_{k+1},i-1]$ and the events
\[
S_i:=\left\{\sum_{j\in I_i}X^{k+1}_j>0\right\}.
\]
 As $\eta_k$ (defined as in Lemma \ref{lemma:uppertime}) is a stopping time for the filtration $(\mathcal{F}_i)_{i\geq 0}=(\sigma(U_0,U_{-1},\dots, U_{-i}))_{i\geq 0}$ and the events $S_i$ are independent of  $\mathcal{F}_i$ for all $i\geq 0$, we have by construction of the coupling $\P$ and Wald's equality
\begin{align}
\label{D2}
\P(X^k_0\neq X^{k+1}_0)
= \P\left(\bigcup_{i=0}^{\eta_{k}}S^c_i\right)
\leq \E\left[\sum_{i=0}^{\eta_k} \mathbf{1}_{\{(S_i)^c\}}\right]
= (\E[\eta_k]+1) \P(S_0^c).
\end{align}
Combining (\ref{D1}) and (\ref{D2}) we obtain
\begin{align}
\label{ERB}
\bar{d}(\mu_k,\mu_{k+1})\leq (\E[\eta_k]+1) \P(S_0^c)
\end{align}
for all $k\geq 0$.

To obtain an upper bound for $\E[\eta_k]$ we use Lemma \ref{lemma:uppertime}; for $\P(S_0^c)$ we proceed as follows.
Let $r:=r_{k+1}$ and $(Y_j^{r,k+1})_{j \in \Z}$ be the process compatible with $q_{r,k+1}$. Observe that for all $k \geq r \geq 0$ we have $q_{r,k+1} \in \mathcal{M}_r$.  Also note that, for any $n \geq 1$, and integers $l_1, \ldots, l_n$, we have by construction that 
$$\P\left(\bigcup_{j = 1}^{n}\left\{X_{l_j}^{k+1} < Y_{l_j}^{r,k+1}\right\}\right) = 0,$$
 and therefore
\begin{equation}
\label{Concent0}
\P\left(\sum_{j=1}^{m_{k+1}} X^{k+1}_{j}< 0 \right) \leq\P\left(\sum_{j=1}^{m_{k+1}} Y^{r,k+1}_{j}< 0 \right).
\end{equation}
Furthermore, we have
\begin{equation}
\label{Concent1}
\P\left(\sum_{j=1}^{m_{k+1}} Y^{r,k+1}_{j}  < 0 \right) \leq  \P\left(\left|\frac{1}{m_{k+1}}\sum_{j=1}^{m_{k+1}} Y^{r,k+1}_{j} -\E[Y^{r,k+1}_0]\right| \geq  \frac{\E[Y^{r,k+1}_0]}{2} \right),
\end{equation}
and therefore we can upper bound $\P(S_0^c)$ using a concentration of measure inequality for a Markov chain of order $r < k+1$.\\
Combining   (\ref{ERB}), (\ref{Concent0}),  (\ref{Concent1}),  and Lemmas \ref{lemma:lowerbound} and \ref{Lemconc}, we deduce that, for all $k\geq 0$,
\begin{equation*}
\bar{d}(\mu_k,\mu_{k+1}) \leq 2(\E[\eta_k]+1)\exp\left(-\frac{m_{k+1}\left(\sum_{j\geq k+2}\lambda_j - \sum_{j=r+1}^{k+1}\lambda_{j}\right)^2\Big(1-2\epsilon  \Big)^2}{8\Big(1+\E[\theta^{q_{r,k+1}}]\Big)^2}\right).
\end{equation*}

Let $\alpha>0$ such that $\alpha<\frac{1}{2}-\epsilon$. Define 
\[
A_0:= 8\Big( 1-2\epsilon \Big)^{-2} \ln\left(4\alpha^{-1}\right)
\]
and for all $k\geq 1$,
\begin{equation*}
\label{AAAKKKK}
A_k := 8\Big( 1-2\epsilon\Big)^{-2}(1+m_r (2\epsilon)^{-m_r})^{2}\ln\Big(2^{k+2}(1+m_k(2\epsilon)^{-m_k})\alpha^{-1}\Big).
\end{equation*}
Then, for all $k \geq 0$ choose $m_{k+1}$ as the first odd integer such that 
\begin{align*} \label{eq:condition}
m_{k+1} \geq \frac{A_k} {\left(\sum_{j\geq k+2}\lambda_j - \sum_{j=r+1}^{k+1}\lambda_{j}\right)^2}.
\end{align*}
With these choices, using Lemma \ref{lemma:uppertime} we obtain
\begin{equation*} \label{eq:desiredbar}
\bar{d}(\mu_k,\mu_{k+1})  \leq  \frac{\alpha}{2^{k+1}}
\end{equation*}
for all $k\geq 0$.
Since $\alpha<\frac{1}{2}-\epsilon$ we obtain (\ref{DERT}), which proves the theorem.
\end{proof}

\vspace{1cm}
\begin{proof}[Proof of Corollary \ref{prop:application}]
\mbox{}\\*

 If, for $j\geq 1$, we choose $\lambda_j=\frac{1}{2}\big(\frac{2}{3}\big)^j$, we have for $k \geq 1$, $\sum_{j\geq k+1}\lambda_j - \lambda_{k} \geq 0$, \textit{i.e.}, we have a function $r$ in (\ref{eq:theo2a}) defined by $r_k = k-1$. Let $\epsilon=1/4$ and $\alpha=1/8$, then $A_0=160\ln 2$ and, by (\ref{eq:condition}), $m_1$ must be chosen greater than $320\big(\frac{3}{2}\big)^2\ln 2\approx 216,74$. Let us take $m_1=217$. Now, from (\ref{AAAKKKK}), we can see that in this case, the sequence $(m_k)_{k\geq 1}$ must satisfy $m_k\geq k+1$ for all $k\geq 1$. Therefore, for $k\geq 1$, we have 
\begin{align*}
\frac{A_k} {\left(\sum_{j\geq k+2}\lambda_j - \lambda_{k+1}\right)^2}
&\leq 512 \Big(\frac{9}{2}\Big)^{k+1}(1+m_k2^{m_k})^3\nonumber\\
&\leq 512 \Big(\frac{9}{2}\Big)^{k+1} (64)^{m_k}\nonumber\\
&\leq (577)^{m_k}.
\end{align*}
\end{proof}

\vspace{.25cm}
\begin{proof}[Proof of Corollary \ref{prop:application2}]
\mbox{}\\*

Let $b_1 = 1$ and $c$ a positive constant to be fixed afterwards. For $l \geq 2$, we define $b_l = \lceil 2^{(c\sum_{j=1}^{l-1}b_j)^2} \rceil$, where $\lceil \cdot \rceil$ is the ceilling function.
 Let $s = 3/4$,  for $l \geq 1$ and $j \in \{\sum_{k=1}^{l-1}b_{k}+1, \ldots, \sum_{k=1}^{l}b_{k}\}$ we define
$$\lambda_j = \frac{s^{l-1}-s^l}{b_l}.$$
 It is straightforward to verify that $\sum_{j\geq 1}\lambda_j = 1$.  Let $r_k =  \left \lfloor \sqrt{\log(k)/c} \right \rfloor $ where $\log$ is base 2 logarithm and $\lfloor \cdot \rfloor$ is the floor function. We observe that by construction, for $l \geq 1$, we have
\begin{equation*}
 \sum_{j \geq k+1}\lambda_j - \sum_{j = r(k)+1}^{k}\lambda_j  \geq s^l-(s^{l-2}\wedge 1 -s^l) = \frac{1}{8}\left(\frac{3}{4}\right)^{l-2}.
\end{equation*}

We set $m_j = \lfloor 2^{cj^2}  \rfloor$ if $\lfloor 2^{cj^2}  \rfloor$ is odd, otherwise $m_j=\lfloor 2^{cj^2}  \rfloor-1$ . We want to obtain a sequence $(m_j)_{j \geq 1}$ that satisfies (\ref{eq:theo2a}) and (\ref{eq:theo2b}). Let
\begin{equation*}
B_k = 4(k+1)^2 2^{2(k+1)(\log(2\epsilon)^{-1})}.
\end{equation*}
 We have the following upper bound for (\ref{eq:theo2b}):
\begin{align*}
&A_k\leq \frac{8(k+2)\ln \frac{2}{\alpha}}{( 1-2\epsilon)^{2}}B_k+\frac{8\ln\Big(1+2^{ck^2}(2\epsilon)^{-2^{ck^2}}\Big)}{( 1-2\epsilon)^{2}}B_k.
\end{align*}
 Now, taking $\epsilon = 1/4$ and $\alpha = 1/8$, we have,
 \begin{align*}
A_k&\leq 128\ln2 B_k(k+2)+32\ln2B_k(ck^2+1) + 32 B_k 2^{ck^2}\\
&\leq 81B_k2^{ck^2}.
\end{align*}

Also, we observe that for $c\geq 8$, 
$B_k \leq 2^{2+2\log(k+1)+2(k+1)}\leq 2^{ck} $,
and, therefore,
\begin{equation*}
A_k \leq 81\cdot2^{c(k^2+k)}.
\end{equation*}
Also for $c\geq 2$, we have $\frac{1}{8}\left(\frac{3}{4}\right)^{l-2} \geq \left(\frac{1}{2}\right)^{l+1} \geq 2^{-ck}$.
Finally, to satisfy the conditions in Theorem \ref{theo:perturbation}, it is enough that 
 \begin{equation*}
2^{c(k+1)^2} \geq 81\cdot2^{c(k^2+2k)}.
\end{equation*}
The above inequality is satisfied if 
$c \geq 8$.

\end{proof}

\vspace{0.5cm}
\section{Proof of Theorems \ref{theo:perturbation} and \ref{theo:attractive} } \label{sec:proofs}

\begin{proof}[Proof of Theorem \ref{theo:perturbation}] 
\mbox{}\\*

We proceed in three main steps. First, we prove the existence of  a subsequence $(\mu_{v_j})_{j\geq 0}$ that converges in $\bar{d}$ to a measure $\mu$ compatible with $g$. The same naturally holds for a subsequence $(\mu'_{u_j})_{j \geq 0}$ and some measure $\mu'$ compatible with $g$. Then we prove that under the conditions of the theorem $\mu$ and $\mu'$ are actually distinct. The statement about bernoullicity then follows directly from the well-known fact that $\bar{d}$-limit of regular Markov chains are Bernoulli.

For the first step, we will prove that there exists a subsequence $(\mu_{v_j})_{j\geq 0}$  converging weakly  and in entropy to a measure $\mu$ compatible with $g$. Because regular Markov processes are finitely determined and, for this class of processes, the weak convergence and the convergence of the entropy together imply convergence in $\bar{d}$, we conclude that $\mu_{v_j}$ converges to $\mu$ in $\bar{d}$-distance (see definition in p.221 and Theorem IV.2.9 of \cite{shields/1996}). 

We consider processes on finite alphabet, therefore the space of respective probability measures endowed with the weak topology is compact. Hence, for any sequence $(\mu_j)_{j \geq 0}$ there exists a convergent subsequence $(\mu_{v_j})_{j\geq 0}$. Let $\mu$ be its weak limit. From the weak convergence of $\mu_{v_j}$ to $\mu$ and the convergence of $g_{v_j}$ to $g$ in $\mathcal{C}(\mathcal{X})$, it is immediate that $\mu$ is a $g$-measure.

Now, we observe that the entropy $H(\mu_{v_j})$ of an ergodic Markov process $\mu_{v_j}$can be written as
\begin{equation*}
H(\mu_{v_j}) = - \int_{\mathcal{X}}\log g_{v_j} d\mu_{v_j} .
\end{equation*}
A standard computation shows that the entropy $H(\mu)$ of $\mu$ is given by
\begin{equation*}
H(\mu) = - \int_{\mathcal{X}}\log g d\mu.
\end{equation*}
We note that $g \in \mathcal{G}$ and therefore $\log g \in \mathcal{C}(\mathcal{X})$. Again, because $\mu_{v_j} \rightarrow \mu$ weakly and $\log g_{v_j}$ converges to $\log g$ in $\mathcal{C}(\mathcal{X})$ we have that
\begin{equation*}
 \int_{\mathcal{X}}\log g_{v_j} d\mu_{v_j} \rightarrow \int_{\mathcal{X}}\log g d\mu
\end{equation*}
Thus, we conclude that $\mu_{v_j}$ converges in  $\bar{d}$ to $\mu$.
 
We now come to the second step, and prove that the limits $\mu$ and $\mu'$ are distinct. Taking $v_0 = k$, we have
\begin{equation*}
\lim_{j \to \infty}\bar{d}(\mu_k,\mu_{v_j}) = \bar{d}(\mu_k,\mu).
\end{equation*}
We also have
\begin{equation*}
\bar{d}(\mu_k,\mu_{v_j}) \leq \sum_{j=k}^\infty \bar{d}(\mu_{j},\mu_{j+1})
\end{equation*}
and, therefore,
\begin{equation*}
\bar{d}(\mu_k,\mu) \leq \sum_{j=k}^\infty \bar{d}(\mu_{j},\mu_{j+1}).
\end{equation*}
Similarly,
\begin{equation*}
\bar{d}(\mu_k',\mu') \leq \sum_{j=k}^\infty \bar{d}(\mu_{j}',\mu_{j+1}').
\end{equation*}
Thus, if (\ref{eq:perturbation}) is satisfied, we have 
\begin{equation*}
\bar{d}(\mu_k,\mu) + \bar{d}(\mu_k',\mu') < \bar{d}(\mu_k,\mu_k')
\end{equation*}
showing that there exist two distinct $g$-measures $\mu$ and $\mu'$.

\end{proof}

\vspace{0.5cm}
\begin{proof}[Proof of Theorem \ref{theo:attractive}]
\mbox{}\\*

The proof follows from the properties of attractive functions $g \in \mathcal{G}$ and associated $g$-measures described in \citet{hulse/2006, hulse/1991}.  

For $f \in \mathcal{C}(\mathcal{X})$, $g \in \mathcal{G}$, and all $x \in \mathcal{X}$, we define the \emph{Ruelle operator} $L_g$ by
\begin{equation*}
L_gf(x) = \sum_{a \in A}g(ax)f(ax).
\end{equation*}
Let $\underline{+1}, \underline{-1} \in \mathcal{X}$ defined by $\underline{+1}_i = 1$ and  $\underline{-1}_i = -1$ for $i \leq 0$. By Lemma 2.1 in \citet{hulse/1991}, if $g$ is attractive, for $f \in \mathcal{C}(\mathcal{X})$, we have
\begin{equation} \label{eq:limitmaxmim}
\lim_{n \rightarrow \infty}L_g^nf(\underline{+1}) = \int_\mathcal{X} fd\mu^+,
\end{equation}
\begin{equation*}
\lim_{n \rightarrow \infty}L_g^nf(\underline{-1}) = \int_\mathcal{X} fd\mu^-,
\end{equation*}
where $\mu^+$ and $\mu^-$ are extremal $g$-measures. If $\mu^+=\mu^-$ we have a unique $g$-measure.
Let $h_1$ and $h_2$ be elements of $\mathcal{G}$ and $x, y \in \mathcal{X}$. We say that $h_1$ \emph{dominates} $h_2$ if for all $x\geq y$  we have $h_1(1x) \geq h_2(1y)$.  
From \citet{hulse/2006} (p.442) if $h_1$ dominates $h_2$,  for any increasing function $f \in \mathcal{C}(\mathcal{X})$, $x \geq y$, and $n\geq 1$ we have
 \begin{equation} \label{eq:dominance}
 L_{h_1}^nf(x) \geq L_{h_2}^nf(y).
\end{equation}

We start with the following lemma.
\begin{lemma} \label{lemma:dbarattract}
 Let $h_1, h_2$ be attractive and $h_1$ dominates $h_2$.   If $\nu_1^+$ and $\nu_2^+$ are the extremal $h_1$ and $h_2$-measures defined respectively by iterating $L_{h_1}$ and $L_{h_2}$ as in (\ref{eq:limitmaxmim}), we have that
 \begin{equation}
 \bar{d}(\nu_1^+, \nu_2^+) = \nu_1^+(\{x\in \mathcal{X}:x_0 = 1\})-\nu_2^+(\{x\in \mathcal{X}:x_0 = 1\}).
\end{equation}
 \end{lemma}

\begin{proof}

As in \citet{hulse/1991}, for $x, y \in \mathcal{X}$, $a,b \in A$, and $h_1,h_2 \in \mathcal{C}(\mathcal{X})$, we define the function $P: \mathcal{X} \times \mathcal{X} \to [0,1]$ by  
\begin{equation*}
P(ax,by)=
\left\{
\begin{array}{ccc}
\min \left\{ h_1(ax), h_2(ay) \right\}&\textrm{if }\,a=b\\
\left\{h_1(ax)-h_2(ay)\right\}\vee0&\textrm{otherwise},
\end{array}
\right.
\end{equation*}
and $\sum_{a\in A}\sum_{b\in A} P(ax,by)=1$.
Let $f \in \mathcal{C}(\mathcal{X})$. We define $\nu_1^+$ and $\nu_2^+$ by $\lim_{n \rightarrow \infty}L_{h_1}^nf(\underline{+1}) = \int_\mathcal{X} fd\nu_1^+$ and $\lim_{n \rightarrow \infty}L_{h_2}^nf(\underline{+1}) = \int_\mathcal{X} fd\nu_2^+$ , respectively. If $h_1$ and $h_2$ are attractive and $h_1$ dominates $h_2$, we can use $P$ to define a coupling between $\nu_1^+$ and $\nu_2^+$. To see this, let $f_1, f_2 \in \mathcal{C}(\mathcal{X})$ and $x,y \in \mathcal{X}$. We introduce the Ruelle operator $L_{P}$ as
\begin{equation*}
L_{P}(f_1\otimes f_2)(x,y) = \sum_{a \in A} \sum_{b \in A}P(ax, by)f_1(ax)f_2(by).
\end{equation*}
For any functions $f_1, f_2 \in \mathcal{C}(\mathcal{X})$,  we have that $\lim_{n \rightarrow \infty}L^n_{P}(f_1\otimes f_2)(\underline{+1},\underline{+1})$ exists and defines a coupling $\nu$ between $\nu_1^+$ and $\nu_2^+$  (\citet{hulse/1991}). By construction and definition of $P$, this coupling has the property that $\nu(\{(x,y) \in \mathcal{X}\times \mathcal{X}: x_0 < y_0\}) = 0$. This implies that
\begin{align*}
\bar{d}(\nu_1,\nu_2) &\leq \nu(\{(x,y) \in \mathcal{X}\times \mathcal{X}: x_0 \neq y_0\}) \notag \\
&=\nu_1(\{x\in \mathcal{X}:x_0 = 1\})-\nu_2(\{x\in \mathcal{X}:x_0 = 1\}).
\end{align*}
Moreover, we also have by definition of $\bar{d}$-distance that 
\begin{equation*}
 \bar{d}(\nu_1^+,\nu_2^+) \geq \nu_1^+(\{x\in \mathcal{X}:x_0 = 1\})-\nu_2^+(\{x\in \mathcal{X}:x_0 = 1\}),
\end{equation*}
which implies that 
\begin{equation*} 
\bar{d}(\nu_1^+,\nu_2^+) = \nu_1^+(\{x\in \mathcal{X}:x_0 = 1\})-\nu_2^+(\{x\in \mathcal{X}:x_0 = 1\}).
\end{equation*}

\end{proof}

 	We introduce a sequence of functions $g_{j}, g_{j}' \in \mathcal{M}$ for each $j\geq 1$ and $x \in \mathcal{X}$ by
\begin{equation*}
g_j(1x^{-1}_{-\infty}) = \sup_{y \in \mathcal{X}}g(1x^{-1}_{-j}y),
\end{equation*}
and
\begin{equation*}
g_j'(1x^{-1}_{-\infty}) = \inf_{y \in \mathcal{X}}g(1x^{-1}_{-j}y).
\end{equation*}
For $j=0$ we define $g_0(1x^{-1}_{-\infty})= \sup_{y \in \mathcal{X}}g(1y)$ and $g_0'(1x^{-1}_{-\infty})= \inf_{y \in \mathcal{X}}g(1y)$. Observe that if $g$ is attractive, $g_j$ and $g_j'$ are also attractive. Moreover, for all $j \geq 0$, $g_j$ dominates $g$ and $g$ dominates $g_j'$.

Let $\mu_j$ be the unique $g$-measure of $g_j$. From Lemma \ref{lemma:dbarattract}
\begin{equation} \label{eq:attra}
\bar{d}(\mu_j,\mu^+) = \mu_j(\{x\in \mathcal{X}:x_0 = 1\})-\mu^+(\{x\in \mathcal{X}:x_0 = 1\}).
\end{equation}

Now, by definition of $\mu^+$, for any $g$-measure $\mu$ we have
\begin{equation} \label{eq:ineqmax}
\mu^+(\{x\in \mathcal{X}:x_0 = 1\})-\mu(\{x\in \mathcal{X}:x_0 = 1\}) \geq 0.
\end{equation}
Let $\mu_{v_j}$ be any subsequence  converging weakly to some $g$-measure. Because, for all $j\geq 1$, $g_{v_j}$ dominates $g$, we have
\begin{equation*}
\lim_{j \rightarrow \infty}\mu_{v_j}(\{x\in \mathcal{X}:x_0 = 1\})-\mu^+(\{x\in \mathcal{X}:x_0 = 1\}) \geq 0.
\end{equation*}
The above equation together with (\ref{eq:ineqmax}) implies that 
\begin{equation*}
\lim_{j \rightarrow \infty}\mu_{v_j}(\{x\in \mathcal{X}:x_0 = 1\})=\mu^+(\{x\in \mathcal{X}:x_0 = 1\}).
\end{equation*}
As this holds for any subsequence, we have that
\begin{equation*}
\lim_{j \rightarrow \infty}\mu_{j}(\{x\in \mathcal{X}:x_0 = 1\})=\mu^+(\{x\in \mathcal{X}:x_0 = 1\}).
\end{equation*}

The above equation and Lemma \ref{lemma:dbarattract} imply
$$\lim_{j \rightarrow \infty}\bar{d}(\mu_j,\mu^+) = 0,$$
and we conclude that
\begin{equation} \label{eq:attractsup}
\sum_{j = k}^{\infty}\bar{d}(\mu_j,\mu_{j+1})  = \mu_k(\{x\in \mathcal{X}:x_0 = 1\})-\mu^+(\{x\in \mathcal{X}:x_0 = 1\}).
\end{equation}

Now, let $\mu_j'$ be the $g_j'$ measures. Repeating again the above arguments, we have that
\begin{equation} \label{eq:attractinf}
\sum_{j = k}^{\infty}\bar{d}(\mu_j',\mu_{j+1}')  = \mu^{-}(\{x\in \mathcal{X}:x_0 = 1\})-\mu_k'(\{x\in \mathcal{X}:x_0 = 1\}).
\end{equation}
Using Lemma  \ref{lemma:dbarattract} again, we have
\begin{equation} \label{eq:attractend}
\bar{d}(\mu_k,\mu_k')  = \mu_k(\{x\in \mathcal{X}:x_0 = 1\})-\mu_k'(\{x\in \mathcal{X}:x_0 = 1\}).
\end{equation}

Combining (\ref{eq:attractsup}), (\ref{eq:attractinf}), and (\ref{eq:attractend}) we have that inequality
\begin{equation*} 
\sum_{j = k}^{\infty}\bar{d}(\mu_j,\mu_{j+1})  + \sum_{j = k}^{\infty}\bar{d}(\mu_j',\mu_{j+1}')  < \bar{d}(\mu_k,\mu_k')
\end{equation*}
is equivalent to 
\begin{equation} \label{eq:condhulse}
\mu^+(\{x\in \mathcal{X}:x_0 = 1\})-\mu^-(\{x\in \mathcal{X}:x_0 = 1\}) > 0.
\end{equation}

Finally, from Theorem 2.2 in \citep{hulse/1991}, we have that inequality (\ref{eq:condhulse}) holds if and only if there are several $g$-measures.

\end{proof}

\bibliographystyle{jtbnew}
\bibliography{sandro_bibli.bib}

\end{document}